\def\thm@space@setup{%
  \thm@preskip=1cm plus .5cm minus .5cm
  \thm@postskip=.5cm plus .6cm minus .5cm 
}
\newtheorem{thm}{Theorem}
\newtheorem{lma}{Lemma}
\newtheorem{prop}{Proposition}
\newtheorem{rmk}{Remark}
\numberwithin{thm}{section}
\numberwithin{lma}{section}
\numberwithin{dfn}{section}
\numberwithin{cor}{section}
\numberwithin{rmk}{section}
\numberwithin{prop}{section}
\newcommand*{\thmref}[1]{Theorem~\ref{#1}}
\newcommand*{\lmaref}[1]{Lemma~\ref{#1}}
\newcommand*{\propref}[1]{Proposition~\ref{#1}}
\title{On the number of prime factors with a given multiplicity over $h$-free and $h$-full numbers}
\date{}
\begin{document}

\author{Sourabhashis Das, Wentang Kuo, Yu-Ru Liu}

\newcommand{\Addresses}{{
  \bigskip
  \footnotesize

  Sourabhashis Das (Corresponding author), Department of Pure Mathematics, University of Waterloo, 200 University Avenue West, Waterloo, Ontario, Canada, N2L 3G1. \\
  Email address: \texttt{s57das@uwaterloo.ca}

  \medskip

  Wentang Kuo, Department of Pure Mathematics, University of Waterloo, 200 University Avenue West, Waterloo, Ontario, Canada, N2L 3G1. \\
  Email address: \texttt{wtkuo@uwaterloo.ca}
  
  \medskip

  Yu-Ru Liu, Department of Pure Mathematics, University of Waterloo, 200 University Avenue West, Waterloo, Ontario, Canada, N2L 3G1. \\
  Email address: \texttt{yrliu@uwaterloo.ca}

}}

%
%
%

\maketitle 

\begin{abstract}
Let $k$ and $n$ be natural numbers. Let $\omega_k(n)$ denote the number of distinct prime factors of $n$ with multiplicity $k$ as studied by Elma and the third author \cite{el}. We obtain asymptotic estimates for the first and the second moments of $\omega_k(n)$ when restricted to the set of $h$-free and $h$-full numbers. We prove that $\omega_1(n)$ has normal order $\log \log n$ over $h$-free numbers, $\omega_h(n)$ has normal order $\log \log n$ over $h$-full numbers, and both of them satisfy the Erd\H{o}s-Kac Theorem. Finally, we prove that the functions $\omega_k(n)$ with $1 < k < h$ do not have normal order over $h$-free numbers and $\omega_k(n)$ with $k > h$ do not have normal order over $h$-full numbers.
\end{abstract}

\section{Introduction}

For\footnotetext{\textbf{2020 Mathematics Subject Classification: 11N05, 11N37.}}\footnotetext{\textbf{Keywords: prime factors, prime-counting functions, normal order, Erd\H{o}s-Kac theorem.}}\footnotetext{The research of W. Kuo and Y.-R. Liu are supported by NSERC discovery grants.} a natural number $n$, let the prime factorization of $n$ be given as
\begin{equation}\label{factorization}
n = p_1^{s_1} \cdots p_r^{s_r},
\end{equation}
where $p_i'$s are its distinct prime factors and $s_i'$s are their respective multiplicities. Let $\omega(n)$ denote the total number of distinct prime factors in the factorization of $n$. Thus, $\omega(n) =r$. The average distribution of $\omega(n)$ over natural numbers is well-known (see \cite[Theorem 430]{hw}):
\begin{equation}\label{ave_omega}
\sum_{n \leq x} \omega(n) = x \log \log x + B_1 x + O \left( \frac{x}{\log x}\right),
\end{equation}
where $B_1$ is the Mertens constant given by
\begin{equation}\label{B1}
B_1 = \gamma + \sum_p \left( \log \left( 1 - \frac{1}{p} \right) + \frac{1}{p} \right),
\end{equation}
with $\gamma \approx 0.57722$, the Euler-Mascheroni constant, and where the sum runs over all primes $p$. 

Let $h \geq 2$ be an integer. Let $n$ be a natural number with the factorization given in \eqref{factorization}. We say $n$ is $h$\textit{-free} if $s_i \leq h-1$ for all $i \in \{1, \cdots, r\}$, and we say $n$ is $h$\textit{-full} if $s_i \geq h$ for all $i \in \{1, \cdots, r\}$. Let $\mathcal{S}_h$ denote the set of $h$-free numbers and let $\mathcal{N}_h$ denote the set of all $h$-full numbers. 
Let $\gamma_{0,h}$ be the constant defined as
\begin{equation}\label{gamma0h}
\gamma_{0,h} := \prod_{p} \left( 1 + \frac{p-p^{1/h}}{p^2(p^{1/h} - 1)} \right),
\end{equation}
where the product runs over all primes $p$, and let $\mathcal{L}_h(r)$ be the convergent sum defined for $r > h$ as
\begin{equation}\label{lhr}
     \mathcal{L}_h(r) := \sum_p \frac{1}{p^{(r/h)-1} \left( p - p^{1-1/h} + 1 \right)}.
\end{equation}
In \cite[Theorem 1.1 and Theorem 1.2]{dkl2}, the authors proved the following distribution results for $\omega(n)$ restricted to the sets of $h$-free numbers and $h$-full numbers:
\begin{equation}\label{hfreeomega}
\sum_{\substack{n \leq x \\ n \in \mathcal{S}_h}} \omega(n) = \frac{1}{\zeta(h)} x \log \log x + 
 \left( B_1 - \sum_{p} \frac{p-1}{p(p^h -1)} \right) \frac{x}{\zeta(h)}
+ O_h \left( \frac{x}{\log x}\right),
\end{equation}
and
\begin{align}\label{hfullomega}
\sum_{\substack{n \leq x \\ n \in \mathcal{N}_h}} \omega(n) & =  \gamma_{0,h} x^{1/h} \log \log x + \left( B_1 - \log h + \mathcal{L}_h(h+1) - \mathcal{L}_h(2h) \right) \gamma_{0,h} x^{1/h} \notag \\
& \hspace{.5cm} + O_h \left( \frac{x^{1/h}}{\log x} \right),
\end{align}
where $\zeta(s)$ represents the classical Riemann $\zeta$-function, and where $O_h$ denotes big-O with the implied constant depending on $h$.
Given natural numbers $n$ and $k$, let $\omega_k(n)$ denote the number of distinct prime factors of $n$ with multiplicity $k$. Note that
$$\omega(n) = \sum_{k \geq 1} \omega_k(n).$$
Let $P(k)$ denote the convergent sum given by
\begin{equation}\label{Pk}
P(k):= \sum_p \frac{1}{p^k}.
\end{equation} 
Using this definition, the distributions of average value of $\omega_k(n)$ over natural numbers were proved by Elma and the third author \cite[Theorem 1.1]{el} as
\begin{equation}\label{omega1eq}
\sum_{n \leq x} \omega_1(n) = x \log \log x + \left( B_1 -P(2) \right) x + O \left( \frac{x}{\log x} \right),
\end{equation} 
and for $k \geq 2$, 
\begin{equation}\label{omegakeq}
 \sum_{n \leq x} \omega_k(n) = \left( P(k) - P(k+1) \right) x + O \left( x^\frac{k+1}{3k-1} (\log x)^2 \right).
\end{equation} 
The results in \eqref{ave_omega}, \eqref{omega1eq} and \eqref{omegakeq} suggest that $\omega(n)$ and $\omega_1(n)$ share a similar asymptotic distribution with only a difference in the coefficient of the second main term, whereas the average distribution of $\omega_k(n)$ with $k \geq 2$ is smaller than that of $\omega(n)$. We verify this in our work as well. In the next two theorems, we show that when restricted to the set of $h$-free numbers, $\omega_1(n)$ behaves similarly to $\omega(n)$ and $\omega_k(n)$ with $k \geq 2$ exhibits a smaller asymptotic size that $\omega(n)$. We begin by studying the distributions of $\omega_1(n)$ over $h$-free numbers. 
We define the constants
\begin{equation}\label{C1}
    C_1 :=  B_1 - \sum_{p} \frac{p^{h-1}-1}{p(p^h -1)},
\end{equation}
and
\begin{equation*}
    C_2 := C_1^2 + C_1 - \zeta(2) -  \sum_p  \left( \frac{p^{h-1} - p^{h-2}}{p^h - 1} \right)^2.
\end{equation*}
For the first and the second moment of $\omega_1(n)$ over $h$-free numbers, we prove:
\begin{thm}\label{hfreeomega1}
Let $x > 2$ be a real number. Let $h \geq 2$ be an integer. Let $\mathcal{S}_h$ be the set of $h$-free numbers. Then, we have
$$\sum_{\substack{n \leq x \\ n \in \mathcal{S}_h}} \omega_1(n) = \frac{1}{\zeta(h)} x \log \log x + 
 \frac{C_1}{\zeta(h)} x
+ O_h \left( \frac{x}{\log x}\right),$$
and
\begin{align*}
\sum_{\substack{n \leq x \\ n \in \mathcal{S}_h}} \omega_1^2(n) =
\frac{1}{\zeta(h)}  x (\log \log x)^2 + \frac{2 C_1 + 1}{\zeta(h)} x \log \log x + \frac{C_2}{\zeta(h)} x + O_h \left( \frac{x}{\log x}\right).
\end{align*}
\end{thm}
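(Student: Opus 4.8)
The plan is to reduce both moments to sums over primes of counts of $h$-free integers coprime to a small fixed set of primes, and then to evaluate those prime sums by Mertens-type estimates. The first ingredient I would establish is a counting lemma: for a finite set of primes $\mathcal P$ and $y\ge 1$, if $N_h(y;\mathcal P)$ denotes the number of $h$-free $m\le y$ with $p\nmid m$ for every $p\in\mathcal P$, then
\[
N_h(y;\mathcal P)=\frac{y}{\zeta(h)}\prod_{p\in\mathcal P}\lambda_p+O_{|\mathcal P|}\!\left(y^{1/h}\right),\qquad \lambda_p:=\frac{(p-1)p^{h-1}}{p^{h}-1},
\]
uniformly in $\mathcal P$. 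This comes out of writing the $h$-free indicator as $\sum_{d^h\mid m}\mu(d)$, detecting the coprimality condition by a second M\"obius inversion, and summing the resulting arithmetic progressions, the emerging Euler product being $\frac1{\zeta(h)}\prod_{p\in\mathcal P}\lambda_p$. I would also record the algebraic identities $\frac{\lambda_p}{p}=\frac1p-\frac{p^{h-1}-1}{p(p^{h}-1)}=\frac{p^{h-1}-p^{h-2}}{p^{h}-1}$, which are exactly what produce $C_1$ and $C_2$.

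For the first moment, a prime $p$ satisfies $p\|n$ for $n\in\mathcal S_h$ precisely when $n=pm$ with $p\nmid m$ and $m\in\mathcal S_h$, so $\sum_{n\le x,\,n\in\mathcal S_h}\omega_1(n)=\sum_{p\le x}N_h(x/p;\{p\})$. Substituting the lemma, the total error is $\sum_{p\le x}O((x/p)^{1/h})=O_h(x/\log x)$ by the prime number theorem, while the main term is $\frac{x}{\zeta(h)}\sum_{p\le x}\frac{\lambda_p}{p}$. Mertens' second theorem and the absolute convergence of $\sum_p\frac{p^{h-1}-1}{p(p^h-1)}$ give
\[
\sum_{p\le x}\frac{\lambda_p}{p}=\log\log x+B_1-\sum_p\frac{p^{h-1}-1}{p(p^{h}-1)}+O\!\left(\frac1{\log x}\right)=\log\log x+C_1+O\!\left(\frac1{\log x}\right),
\]
which is the first asserted formula.

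For the second moment, expanding $\omega_1(n)^2=\omega_1(n)+\sum_{p\ne q,\,p\|n,\,q\|n}1$ gives $\sum_{n\le x,\,n\in\mathcal S_h}\omega_1^2(n)=\sum_{n\le x,\,n\in\mathcal S_h}\omega_1(n)+\Sigma_2$, where $\Sigma_2=\sum_{p\ne q}N_h(x/(pq);\{p,q\})$ and the inner count vanishes once $pq>x$. The first term was just computed. Inserting the lemma into $\Sigma_2$ produces $\frac{x}{\zeta(h)}\sum_{p\ne q,\,pq\le x}\frac{\lambda_p\lambda_q}{pq}$ plus an error, and I would evaluate the prime double sum by
\[
\sum_{\substack{p\ne q\\ pq\le x}}\frac{\lambda_p\lambda_q}{pq}=\Bigl(\sum_{p\le x}\frac{\lambda_p}{p}\Bigr)^{2}-\sum_{p\le\sqrt x}\frac{\lambda_p^{2}}{p^{2}}-\sum_{\substack{p,q\le x\\ pq>x}}\frac{\lambda_p\lambda_q}{pq}.
\]
Here the square is $(\log\log x+C_1)^2+(\text{error})$ by the first-moment computation; the diagonal converges to $\sum_p\bigl(\frac{p^{h-1}-p^{h-2}}{p^{h}-1}\bigr)^2$; and, since $\lambda_p=1+O(1/p)$, the hyperbola tail equals $\sum_{p,q\le x,\,pq>x}\frac1{pq}+O(1/\log x)$. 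For the last sum I would use $\sum_{x/p<q\le x}\frac1q=-\log\!\bigl(1-\tfrac{\log p}{\log x}\bigr)+O\!\bigl(\tfrac1{\log(x/p)}\bigr)$ and a partial-summation (Riemann-sum) argument to obtain $\sum_{p,q\le x,\,pq>x}\frac1{pq}\to\int_0^1\frac{-\log(1-u)}{u}\,du=\zeta(2)$. Collecting the three pieces gives $\sum_{p\ne q,\,pq\le x}\frac{\lambda_p\lambda_q}{pq}=(\log\log x)^2+2C_1\log\log x+\bigl(C_1^{2}-\zeta(2)-\sum_p(\tfrac{p^{h-1}-p^{h-2}}{p^{h}-1})^2\bigr)+(\text{error})$; adding back the first moment turns the $\log\log x$-coefficient into $2C_1+1$ and the constant into $C_1^2+C_1-\zeta(2)-\sum_p(\cdots)^2=C_2$, exactly as claimed.

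The main obstacle, and where most of the work goes, is the uniform control of the error terms in the second moment. The crude bound for the error after inserting the lemma is $\sum_{pq\le x}(x/pq)^{1/h}$, which by a Landau-type count of semiprimes is of size $\asymp x\log\log x/\log x$; the Mertens errors accumulated while evaluating $\sum_{pq>x}1/(pq)$ enter at the same level. Reaching the stated $O_h(x/\log x)$ requires: (i) splitting the range of $(p,q)$ at $pq\asymp\sqrt x$, so that for $pq\le\sqrt x$ the $y^{1/h}$ error is a genuine power saving summing to $O(x^{3/4})$, while for $\sqrt x<pq\le x$ one compares $N_h(x/(pq);\{p,q\})$ with its main term with extra care — for instance via the refined estimate $\#\{m\le y:m\in\mathcal S_h\}=y/\zeta(h)+O\bigl(y^{1/h}\exp(-c(\log y)^{3/5}(\log\log y)^{-1/5})\bigr)$ from the zero-free region of $\zeta$; and (ii) using the prime number theorem form of Mertens' theorem, $\sum_{p\le t}1/p=\log\log t+B_1+O(e^{-c\sqrt{\log t}})$, so that squaring it and summing the hyperbola tail costs only $o(x/\log x)$. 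The conceptual steps are short; it is this bookkeeping that carries the weight.
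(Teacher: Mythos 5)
Your proposal is correct and takes essentially the same route as the paper: the same coprimality counting lemma for $h$-free numbers (Lemma \ref{restrict}), the same reduction of both moments to prime sums via $n=py$ and the expansion $\omega_1^2=\omega_1+\sum_{p\neq q}$, and the same Mertens-type evaluations, the only cosmetic difference being that you re-derive the asymptotic for $\sum_{pq\le x}1/(pq)$ by the hyperbola method and the integral $\int_0^1 \frac{-\log(1-u)}{u}\,du=\zeta(2)$, where the paper simply cites Lemma \ref{saidak}. Your closing worry about pushing the error below $x\log\log x/\log x$ is apt but moot for matching the paper: its own proof of the second moment also only delivers $O_h(x\log\log x/\log x)$ (both from the off-diagonal error bound and from Lemma \ref{saidak}), so none of the extra refinements you sketch are needed to reproduce what is actually proved.
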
 
Next, we establish the moments of $\omega_k(n)$ with $k \geq 2$ over $h$-free numbers as the following:
\begin{thm}\label{hfreeomegak}
Let $k \geq 2$ and $h \geq 2$ be any integers. Let $\mathcal{S}_h$ be the set of $h$-free numbers. For $k \leq (h-1)$, we have
$$\sum_{\substack{n \leq x \\ n \in \mathcal{S}_h}} \omega_k(n) = 
\sum_{p} \left( \frac{p^{h} - p^{h-1}}{p^k(p^h - 1)} \right)  \frac{x}{\zeta(h)} + O_{h,k} \left( \frac{x^{1/k}}{\log x}\right),$$
and
\begin{align*}
  & \sum_{\substack{n \leq x \\ n \in \mathcal{S}_h}} \omega_k^2(n) \\
  & = \left( \left( \sum_{\substack{p}} \left( \frac{p^{h} - p^{h-1}}{p^k(p^h - 1)} \right) \right)^2 -  \sum_{\substack{p}} \left( \frac{p^{h} - p^{h-1}}{p^k(p^h - 1)} \right)^2 + \sum_{p} \left( \frac{p^{h} - p^{h-1}}{p^k(p^h - 1)} \right)  \right) \frac{x}{\zeta(h)} \\
  & \hspace{.5cm} + O_{h,k} \left( \frac{x^{1/k} \log \log x}{\log x} \right),  
\end{align*}
where $O_{h,k}$ means that the implied constant depends on $h$ and $k$.
\end{thm}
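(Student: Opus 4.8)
The plan is to compute both moments by writing $\omega_k(n)$ as a sum of indicator functions and then swapping the order of summation. Concretely, for a prime $p$ let $\mathbf{1}_{p^k \| n}$ denote the indicator that $p$ divides $n$ with multiplicity exactly $k$; then $\omega_k(n) = \sum_p \mathbf{1}_{p^k \| n}$, so that
\begin{equation*}
\sum_{\substack{n \le x \\ n \in \mathcal{S}_h}} \omega_k(n) = \sum_{p \le x^{1/k}} \ \#\{ n \le x : n \in \mathcal{S}_h,\ p^k \| n \}.
\end{equation*}
Writing $n = p^k m$ with $\gcd(m,p)=1$, membership of $n$ in $\mathcal{S}_h$ forces $k \le h-1$ (which is the hypothesis) and is equivalent to $m$ being $h$-free and coprime to $p$. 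So the inner count is the number of $h$-free $m \le x/p^k$ with $p \nmid m$, which by a standard sieve/convolution argument (Dirichlet series $\sum \mu^2(m)/m^s$ with the Euler factor at $p$ removed) equals $\frac{1}{\zeta(h)} \cdot \frac{p^h-1}{p^h} \cdot \bigl(1 - \frac{1}{p}\bigr)^{-1}\!\cdot$ wait — more carefully, the density of $h$-free integers coprime to $p$ is $\frac{1}{\zeta(h)}\cdot\frac{1 - p^{-1}}{1 - p^{-h}} = \frac{1}{\zeta(h)}\cdot\frac{p^{h-1}(p-1)}{p^h-1}$, giving a main term $\frac{x}{\zeta(h)}\cdot\frac{p^{h-1}(p-1)}{p^k(p^h-1)} = \frac{x}{\zeta(h)}\cdot\frac{p^h - p^{h-1}}{p^k(p^h-1)}$ per prime, with an error term $O(\sqrt{x/p^k})$ from the $h$-free counting function. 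Summing over $p \le x^{1/k}$, the main terms give a convergent series (since $k \ge 2$) times $x/\zeta(h)$, and the error terms sum to $O_{h,k}(x^{1/k}/\log x)$ after noting $\sum_{p \le y} \sqrt{x/p^k} \ll \sqrt{x}\, y^{1-k/2}/\log y$; the choice of cutoff and the tail of the main-term series both contribute only $O(x^{1/k}/\log x)$ — actually one must be a little careful here and split the prime sum at some point like $x^{1/k}/\log^A x$ to control everything, mirroring the proof of \eqref{omegakeq}.

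For the second moment, I would expand $\omega_k^2(n) = \sum_{p} \mathbf{1}_{p^k\|n} + \sum_{p \ne q} \mathbf{1}_{p^k\|n}\mathbf{1}_{q^k\|n}$. The diagonal part is exactly the first moment just computed, contributing $\sum_p \frac{p^h - p^{h-1}}{p^k(p^h-1)}\cdot\frac{x}{\zeta(h)}$. For the off-diagonal part, for distinct primes $p,q$ write $n = p^k q^k m$ with $\gcd(m,pq)=1$; then the count of such $h$-free $n \le x$ has main term $\frac{x}{\zeta(h)}\cdot\frac{1}{p^k q^k}\cdot\frac{(1-p^{-1})(1-q^{-1})}{(1-p^{-h})(1-q^{-h})}$, i.e. $\frac{x}{\zeta(h)}\cdot \frac{p^h-p^{h-1}}{p^k(p^h-1)}\cdot\frac{q^h-q^{h-1}}{q^k(q^h-1)}$. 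Summing over ordered pairs of distinct primes $p,q \le x^{1/(2k)}$ and completing to the full double sum introduces an error, so that
\begin{equation*}
\sum_{p \ne q}(\cdots) = \Bigl(\sum_p \tfrac{p^h-p^{h-1}}{p^k(p^h-1)}\Bigr)^2 - \sum_p\Bigl(\tfrac{p^h-p^{h-1}}{p^k(p^h-1)}\Bigr)^2,
\end{equation*}
matching the stated constant. The error terms now come in three flavours: (i) the $h$-free remainder term $O(\sqrt{x/(p^kq^k)})$ summed over pairs; (ii) truncating the prime sums at $x^{1/(2k)}$; (iii) the tail of the convergent double series. I expect the main technical obstacle to be bookkeeping these error terms so that the total is $O_{h,k}(x^{1/k}\log\log x / \log x)$ — in particular, handling the ``mixed'' ranges where one prime is small and the other is near the cutoff, and extracting the $\log\log x$ factor (which arises, as in the classical Turán/Erdős–Kac setup, from $\sum_{p \le y} 1/p \sim \log\log y$) requires care; all the other steps are routine convolution estimates and partial summation.

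Two preliminary lemmas make everything clean: first, the count of $h$-free integers up to $y$ in a fixed coprimality class, namely $\#\{m \le y : \mu^2(m)=1,\ \gcd(m,\ell)=1\} = \frac{y}{\zeta(h)}\prod_{p\mid\ell}\frac{1-p^{-1}}{1-p^{-h}} + O(\sqrt{y})$ (with implied constant absolute, and the product a per-prime correction), which follows from $\mathbf{1}_{h\text{-free}} = \mathbf{1}\ast(\mu\cdot\mathbf{1}_{h\text{-th powers}})$ and the hyperbola method; and second, the elementary estimates $\sum_{p>T} p^{-(k/h+1 - \text{something})} \ll \dots$ — more plainly, for any $c > 1$, $\sum_{p > T} p^{-c} \ll T^{1-c}/\log T$, used to bound the tails of the constant-defining series. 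With these in hand the proof is a direct, if slightly lengthy, execution of the swap-and-estimate strategy above, entirely parallel to the proof of Theorem~\ref{hfreeomega1} but with the key simplification that, because $k \ge 2$, there is no $\log x$-power or $\log\log x$-power in the main term — the series over primes converges — so the arguments are actually easier than for $\omega_1$, the only real work being the uniform control of error terms in the double sum.
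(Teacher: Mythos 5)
Your overall strategy is exactly the paper's: write $\omega_k(n)=\sum_p \mathbf{1}_{p^k\|n}$, swap the order of summation, reduce to counting $h$-free integers $m\le x/p^k$ coprime to $p$ (resp.\ to $pq$ for the off-diagonal part of the second moment), and recover the constant via $\sum_{p\ne q}=\bigl(\sum_p\bigr)^2-\sum_p(\cdot)^2$. The main-term computation is correct. However, there is a genuine gap in your error analysis. You take the remainder in the $h$-free count up to $y$ to be $O(\sqrt{y})$, i.e.\ $O(\sqrt{x/p^k})$ per prime, and then claim $\sum_{p\le y}p^{-k/2}\ll y^{1-k/2}/\log y$. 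That estimate is the content of Lemma~\ref{primepowerleqx}, which requires the exponent to satisfy $0<\alpha<1$; here $\alpha=k/2\ge 1$ for all $k\ge 2$, so the bound is false: for $k=2$ the sum is $\asymp\log\log y$, and for $k\ge 3$ it converges to a positive constant. Consequently your total error is $\gg\sqrt{x}$ for $k\ge 3$ (and $\asymp\sqrt{x}\log\log x$ for $k=2$), which swamps the claimed $O_{h,k}(x^{1/k}/\log x)$ whenever $k>2$ and misses it by a factor $\log x\log\log x$ when $k=2$. The same defect propagates to the three error ``flavours'' in your second-moment bookkeeping.

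The fix is to use the sharper remainder that the $h$-free structure actually provides, namely $O_h(2^r y^{1/h})$ for the count of $h$-free $m\le y$ coprime to $r$ given primes (the paper's Lemma~\ref{restrict}); note also that your auxiliary lemma is stated with $\mu^2(m)=1$, which is the squarefree ($h=2$) condition rather than $h$-freeness. With the exponent $1/h$ in place of $1/2$, the error sum becomes $x^{1/h}\sum_{p\le x^{1/k}}p^{-k/h}$ with $k/h\le (h-1)/h<1$, so Lemma~\ref{primepowerleqx} genuinely applies and yields $x^{1/h}\cdot x^{1/k-1/h}/\log x=x^{1/k}/\log x$, as required; the analogous two-prime sum $x^{1/h}\sum_{pq\le x^{1/k}}(pq)^{-k/h}$ is where the extra $\log\log x$ in the second-moment error arises (via a case split on $2k$ versus $h$ and Lemma~\ref{sumplogp}), which is the one piece of bookkeeping you flagged but did not carry out.
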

\begin{rmk}
Note that if $n \in \mathcal{S}_h$, then $\omega_k(n) = 0$ for all $k \geq h$. Thus, the distribution of $\omega_k(n)$ with $k \geq h$ over the $h$-free numbers is zero.
\end{rmk}
Next, we prove the distribution of $\omega_k(n)$ over $h$-full numbers. We notice that $\omega(n)$ and $\omega_h(n)$ have similar asymptotic distributions over $h$-full numbers. Moreover, $\omega_k(n)$ with $k > h$ has a smaller asymptotic size than $\omega(n)$. We can thus infer that the smallest power of primes defining the subset of $h$-full numbers contributes to the main term for the asymptotic distribution of $\omega(n)$ over the subset. This inference also satisfies the behavior observed over $h$-free numbers. The set of $h$-free numbers includes the first power of primes, and it is observed that $\omega(n)$ and $\omega_1(n)$, which counts prime factors with multiplicity 1, satisfy similar distributions over $h$-free numbers. To prove the distribution over $h$-full numbers, we define two new constants
\begin{equation}\label{D1}
    D_1 :=  B_1 - \log h - \mathcal{L}_h(2h),
\end{equation}
and
\begin{equation}\label{D2}
    D_2 := D_1^2 + D_1 - \zeta(2) 
     - \sum_{p} \left( \frac{p^{1/h}-1}{p^{1+1/h}-p+p^{1/h}} \right)^2,
\end{equation}
where $\mathcal{L}_h(\cdot)$ is defined in \eqref{lhr}.
For the first moments of $\omega_k(n)$ over $h$-full numbers, we prove:
\begin{thm}\label{hfullomegak}
Let $k \geq 2$ and $h \geq 2$ be any integers. Let $\mathcal{N}_h$ be the set of $h$-full numbers. Then, we have
\begin{align*}
\sum_{\substack{n \leq x \\ n \in \mathcal{N}_h}} \omega_h(n) & =  \gamma_{0,h} x^{1/h} \log \log x  + D_1 \gamma_{0,h} x^{1/h}  + O_h \left( \frac{x^{1/h}}{\log x} \right),
\end{align*}
where $\gamma_{0,h}$ is defined in \eqref{gamma0h}.

Moreover, we have
$$\sum_{\substack{n \leq x \\ n \in \mathcal{N}_h}} \omega_{h+1}(n) = \left( \mathcal{L}_h(h+1) - \mathcal{L}_h(h+2) \right)  \gamma_{0,h} x^{1/h} + O_{h} \left( x^{1/(h+1)} \log \log x \right),$$
and for $k > h+1$, we have
$$\sum_{\substack{n \leq x \\ n \in \mathcal{N}_h}} \omega_k(n) = \left( \mathcal{L}_h(k) - \mathcal{L}_h(k+1) \right)  \gamma_{0,h} x^{1/h} + O_{h,k} \left( x^{1/(h+1)} \right).$$
\end{thm}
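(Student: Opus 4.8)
The plan is to set up a Dirichlet-series / convolution identity for the indicator of $h$-full numbers and for the counting functions $\omega_k(n)$, then extract main terms by a hyperbola-type argument. First I would recall the standard parametrization of $h$-full numbers: every $n\in\mathcal N_h$ can be written uniquely as $n=a_h^h a_{h+1}^{h+1}\cdots a_{2h-1}^{2h-1}$ with the $a_i$ suitably coprime (or, more conveniently, use the generating identity $\sum_{n\in\mathcal N_h} n^{-s} = \zeta(hs)\zeta((h+1)s)\cdots$ up to a factor holomorphic and nonzero for $\Re s>1/(h+1)-\varepsilon$), which is the same device already used to prove \eqref{hfullomega}. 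For a fixed prime $p$ and fixed $k\ge h$, the event ``$p^{\,k}\parallel n$'' on $n\in\mathcal N_h$ means we write $n=p^k m$ with $m\in\mathcal N_h$, $p\nmid m$; so $\omega_k(n)=\sum_{p}\mathbf 1[p^k\parallel n]$ and
\[
\sum_{\substack{n\le x\\ n\in\mathcal N_h}}\omega_k(n)=\sum_{p\le x^{1/k}}\ \sum_{\substack{m\le x/p^k\\ m\in\mathcal N_h,\ p\nmid m}} 1 .
\]
The inner sum is counted by the known asymptotic density of $h$-full numbers up to $y$, namely $\#\{m\le y: m\in\mathcal N_h\}=\gamma_{0,h}y^{1/h}+O(y^{1/(h+1)})$ (this is the $k=h$ input; for the coprimality condition $p\nmid m$ one removes the local factor at $p$, turning $\gamma_{0,h}$ into $\gamma_{0,h}\cdot g_p$ for an explicit Euler factor $g_p=1+O(1/p)$).

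Next I would carry out the summation over $p$. Plugging the density estimate in, the main term becomes
\[
\gamma_{0,h}\,x^{1/h}\sum_{p\le x^{1/k}} \frac{g_p}{p^{k/h}}\;+\;(\text{error}),
\]
and since $k/h>1$ for $k>h$, the series $\sum_p g_p/p^{k/h}$ converges; completing the sum to all primes costs only $O_{h,k}(x^{1/h}\cdot (x^{1/k})^{1-k/h})=O_{h,k}(x^{1/(h+1)}\cdot\text{(something)})$ which is absorbed. A short computation identifies $\sum_p g_p/p^{k/h}$ with $\mathcal L_h(k)-\mathcal L_h(k+1)$ as defined in \eqref{lhr}: indeed the local factor at $p$ of ``$p^k\parallel n$, $n\in\mathcal N_h$'' is $p^{-k/h}$ times the reciprocal of the local mass, and the telescoping $\mathbf 1[p^k\parallel n]=\mathbf 1[p^k\mid n]-\mathbf 1[p^{k+1}\mid n]$ on $\mathcal N_h$ produces exactly the difference $\mathcal L_h(k)-\mathcal L_h(k+1)$ after matching Euler factors — this bookkeeping (getting the constants to come out in the stated closed form) is where most of the care is needed, but it is purely mechanical.

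The case $k=h$ is different because then $k/h=1$ and $\sum_p 1/p$ diverges, so the sum over $p\le x^{1/h}$ contributes a $\log\log x$ term: using $\sum_{p\le y}1/p=\log\log y+B_1+O(1/\log y)$ together with the refined two-term density of $h$-full numbers (which is the content of \eqref{hfullomega}, or rather the ingredient behind it), one gets $\gamma_{0,h}x^{1/h}(\log\log x^{1/h}+\text{const})=\gamma_{0,h}x^{1/h}\log\log x+D_1\gamma_{0,h}x^{1/h}+O_h(x^{1/h}/\log x)$, with the constant $D_1$ in \eqref{D1} assembling $B_1$, the $-\log h$ coming from $\log\log x^{1/h}=\log\log x-\log h+o(1)$, and the $-\mathcal L_h(2h)$ correction coming from the secondary Euler-factor adjustments; this matches the decomposition already visible in \eqref{hfullomega}. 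For the error terms throughout, I would split the $p$-sum at (say) $x^{1/(h k)}$: for small $p$ use the two-term density estimate so the error per prime is $O((x/p^k)^{1/(h+1)})$ and sum; for large $p$ the trivial bound on the inner sum suffices. The main obstacle I anticipate is not any single estimate but the accounting: keeping the coprimality-induced Euler factors straight so that the sum over all primes collapses to precisely $\mathcal L_h(k)-\mathcal L_h(k+1)$ and, in the $k=h$ case, so that the $O(x^{1/h}/\log x)$ error genuinely absorbs the tail of the density error against the divergent prime sum — this requires the second-order term in $\#\{m\le y: m\in\mathcal N_h, p\nmid m\}$ to be uniform enough in $p$, which should follow from the same zeta-function argument used for \eqref{hfullomega} but must be stated with explicit $p$-dependence.
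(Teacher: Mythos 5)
Your proposal is correct and follows essentially the same route as the paper: decompose $\omega_k(n)=\sum_p\mathbf 1[p^k\parallel n]$, write $n=p^km$ with $m\in\mathcal N_h$, $p\nmid m$, count the inner sum by the asymptotic for $h$-full numbers coprime to $p$ with the local Euler factor removed (the paper's Lemma~\ref{Aqhlemma}), and then sum over $p$, using Mertens' theorem in the divergent case $k=h$ and the convergence of $\sum_p p^{-k/h}$ (via Lemma~\ref{needhfullomegak}) for $k>h$; the Euler-factor bookkeeping you flag is exactly how the paper arrives at $\mathcal L_h(k)-\mathcal L_h(k+1)$ and $D_1$, and the uniformity in $p$ you worry about is supplied by the $O_h(x^{\eta_h})$ error in that lemma.
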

For the second moments, we obtain:
\begin{thm}\label{hfullomegak^2}
Under the assumptions as in \thmref{hfullomegak}, we have
\begin{align*}
    \sum_{\substack{n \leq x \\ n \in \mathcal{N}_h}} \omega_h^2(n) & = \gamma_{0,h} x^{1/h} (\log \log x)^2 + \left( 2 D_1 + 1 \right) \gamma_{0,h} x^{1/h} \log \log x + 
     D_2 \gamma_{0,h} x^{1/h} \\
     & \hspace{.5cm} + O_h \left( \frac{x^{1/h} \log \log x}{\log x} \right).
\end{align*}
Moreover, for $k = h+1$, we have,  
\begin{align*}
    & \sum_{\substack{n \leq x \\ n \in \mathcal{N}_h}} \omega_{h+1}^2(n) \\
    & = \Bigg( \left( \mathcal{L}_h(h+1) - \mathcal{L}_h(h+2) \right)^2 + \mathcal{L}_h(h+1) - \mathcal{L}_h(h+2) \\
    & \hspace{.5cm} - \sum_{\substack{p}} \left( \frac{p^{1/h} - 1}{p^{1+2/h} - p^{1+1/h} + p^{2/h}} \right)^2 \Bigg) \gamma_{0,h} x^{1/h} + O_{h} \left( x^{1/(h+1)} (\log \log x)^2\right),
\end{align*}
and for $k > h+1$, we have
\begin{align*}
    & \sum_{\substack{n \leq x \\ n \in \mathcal{N}_h}} \omega_{k}^2(n) \\
    & = \Bigg( \left( \mathcal{L}_h(k) - \mathcal{L}_h(k+1) \right)^2 + \mathcal{L}_h(k) - \mathcal{L}_h(k+1)  \\
    & \hspace{.5cm} - \sum_p \left( \frac{p^{1/h} - 1}{p^{(k+1)/h} - p^{k/h} + p^{(k+1-h)/h}} \right)^2 \Bigg) \gamma_{0,h} x^{1/h}  + O_{h,k} \left( x^{1/(h+1)} \right).
\end{align*}   
\end{thm}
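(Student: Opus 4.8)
The plan is to treat the three cases ($k=h$, $k=h+1$, $k>h+1$) by the same mechanism used for the first moments in \thmref{hfullomegak}, namely by writing $\omega_k^2(n) = \sum_{p\mid\mid_k n}\sum_{q\mid\mid_k n} 1$ and splitting the double sum into the diagonal $p=q$ (which reproduces $\omega_k(n)$, already handled by \thmref{hfullomegak}) and the off-diagonal $p\neq q$. So the core task is to estimate $\sum_{n\le x,\, n\in\mathcal N_h}\,\sum_{\substack{p\neq q\\ p^k\|n,\; q^k\|n}}1$. For each unordered pair of primes $p\neq q$ with $p^k\le x$, $q^k\le x$, the count of $h$-full $n\le x$ divisible by exactly $p^k$ and exactly $q^k$ is, after pulling out the factor $p^kq^k$, a count of $h$-full (or $1$) integers in a shortened range coprime to $pq$; one inserts the standard asymptotic for $h$-full numbers in an interval (the same input giving $\gamma_{0,h}x^{1/h}$ in \eqref{hfullomega}), with the local factors at $p$ and $q$ removed. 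The arithmetic bookkeeping is exactly what produces, for $k>h$, the Euler product pieces $\bigl(\mathcal L_h(k)-\mathcal L_h(k+1)\bigr)^2$ from the two independent prime sums and the subtracted sum $\sum_p(\cdots)^2$ from the ``$p=q$ forbidden'' correction to the product, and for $k=h$ the $\log\log$ powers from the same Mertens-type sum over $p$ that appears in \eqref{hfullomega}.

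Concretely, I would proceed as follows. First, set up the identity $\omega_k^2(n)=\omega_k(n)+2\sum_{\{p,q\}:\,p<q,\ p^k\|n,\ q^k\|n}1$ and quote \thmref{hfullomegak} for the linear term. Second, for the off-diagonal term, fix $p<q$, write $n=p^kq^km$ with $\gcd(m,pq)=1$ and $m$ ranging over integers that are either $1$ or $h$-full, bounded by $x/(p^kq^k)$; apply the counting function for $h$-full integers up to $y$ with two excluded primes, i.e. $\gamma_{0,h}\cdot(\text{local factors at }p,q)^{-1}\cdot y^{1/h}+O(y^{1/(h+1)})$, where the local factor at a prime $\ell$ is the Euler factor $1+\frac{\ell-\ell^{1/h}}{\ell^2(\ell^{1/h}-1)}$ appearing in \eqref{gamma0h}. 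Third, substitute $y=x/(p^kq^k)$ and sum over $p<q$: the main term becomes $\gamma_{0,h}x^{1/h}\sum_{p<q}\frac{(\text{local corrections})}{(p^kq^k)^{1/h}}$, which I would re-express as $\tfrac12\bigl[(\sum_p a_p)^2-\sum_p a_p^2\bigr]$ with $a_p$ the appropriate single-prime weight; matching $a_p$ against the definition \eqref{lhr} of $\mathcal L_h$ (and its shift to exponent $k+1$) identifies $\sum_p a_p=\mathcal L_h(k)-\mathcal L_h(k+1)$ and $\sum_p a_p^2=\sum_p\bigl(\frac{p^{1/h}-1}{p^{(k+1)/h}-p^{k/h}+p^{(k+1-h)/h}}\bigr)^2$, exactly the stated constants. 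Fourth, control the error: the contribution of the $O(y^{1/(h+1)})$ terms summed over $p<q$ with $p^kq^k\le x$ is $O_{h,k}\bigl(x^{1/(h+1)}\sum_{p^kq^k\le x}(p^kq^k)^{-1/(h+1)}\bigr)$, and one checks the prime double-sum converges for $k>h+1$, contributes a $\log\log$ for $k=h+1$ (hence $x^{1/(h+1)}(\log\log x)^2$ after a second such sum in the tail estimate), and for $k=h$ produces the $x^{1/h}\log\log x/\log x$ error via the same mechanism as in the proof of \eqref{hfullomega}; tails where $p^kq^k>x$ contribute negligibly since then $n=p^kq^km$ with $m\ge1$ forces few $n\le x$.

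For $k=h$ the off-diagonal sum does not converge, and instead the weight $a_p$ behaves like $1/p$ up to the Euler correction, so $\sum_{p\le x^{1/h}}a_p=\log\log x+D_1+O(1/\log x)$ by a Mertens-type estimate (this is where the constant $D_1$ from \eqref{D1} and the subtracted sum in \eqref{D2} enter), and $\tfrac12[(\sum a_p)^2-\sum a_p^2]$ gives $\tfrac12(\log\log x)^2+D_1\log\log x+\tfrac12 D_1^2-\tfrac12\zeta(2)-\tfrac12\sum_p(\cdots)^2$; doubling and adding the linear term $\gamma_{0,h}x^{1/h}(\log\log x+D_1)$ from \thmref{hfullomegak} yields precisely the claimed $(\log\log x)^2$, $(2D_1+1)\log\log x$, and $D_2$ coefficients. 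I expect the \textbf{main obstacle} to be the uniform error analysis in the $k=h$ case: one must truncate the prime sum at $x^{1/h}$, handle cross-terms between the $\gamma_{0,h}y^{1/h}$ main term and its error over the full range $p<q$ with $p^hq^hm\le x$, and show that interchanging ``sum over $p,q$ first'' versus ``sum over $n$ first'' only costs $O_h(x^{1/h}\log\log x/\log x)$; this is the same delicate partial-summation-with-Mertens step that underlies \eqref{hfullomega}, and the bulk of the written proof will be devoted to making it rigorous, with the $k>h$ cases following as easier variants.
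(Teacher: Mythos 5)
Your proposal is correct and follows essentially the same route as the paper: the diagonal/off-diagonal split of $\omega_k^2$, an application of the coprimality-restricted $h$-full counting function (Lemma~\ref{Aqhlemma}) with two excluded primes, the identification of the main-term constants via $\mathcal{L}_h$ and the squared single-prime weights, and the Mertens/Lemma~\ref{saidak} treatment (including the $-\zeta(2)$ correction) in the $k=h$ case. The only differences are cosmetic (unordered pairs with a factor of $2$ versus ordered pairs $p\neq q$).
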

\begin{rmk}
    Note that if $n \in \mathcal{N}_h$, then $\omega_k(n) = 0$ for all $k \leq (h-1)$. Thus, the distribution of $\omega_k(n)$ over $h$-full numbers is zero for $k \leq (h-1)$. 
\end{rmk}

We recall the definition of normal order over a subset of natural numbers as mentioned in \cite{dkl2}. Let $S \subseteq \mathbb{N}$ and $S(x)$ denote the set of natural numbers belonging to $S$ and less than or equal to $x$. Let $f, F : S \rightarrow \mathbb{R}_{\geq 0}$ be two functions such that $F$ is non-decreasing. Then, $f(n)$ is said to have normal order $F(n)$ over $S$ if for any $\epsilon > 0$, the number of $n \in S(x)$ that do not satisfy the inequality
$$(1-\epsilon) F(n) \leq f(n) \leq (1+ \epsilon) F(n)$$
is $o(S(x))$ as $x \rightarrow \infty$.
Hardy and Ramanujan \cite{hardyram} proved that $\omega(n)$ has the normal order $\log \log n$ over naturals. In fact, the authors in \cite{dkl2} showed that $\omega(n)$ has the normal order $\log \log n$ over $h$-free and over $h$-full numbers as well. Note that the set of $h$-free numbers has positive density, and both $\omega$ and $\omega_1$ behave asymptotically similar over $h$-free numbers. Thus, the proof of $\omega_1(n)$ having normal order $\log \log n$ over $h$-free numbers follows from the classical case. In particular, one can establish that for any $\epsilon > 0$, the number of $n \in \mathcal{S}_h(x)$ that do not satisfy the inequality
$$(1-\epsilon) \log \log n \leq \omega_1(n) \leq (1+ \epsilon) \log \log n$$
is $o(|\mathcal{S}_h(x)|)$ as $x \rightarrow \infty$. On the other hand, the set of $h$-full numbers has density 0, and thus the proof of normal order of $\omega_h$ does not follow from the classical result. However, since $\omega$ and $\omega_h$ behave asymptotically similar over $h$-full numbers, the proof of $\omega_h(n)$ having normal order $\log \log n$ over $h$-full numbers can be inferred in a manner analogous to the proof presented in \cite[Theorem 1.3]{dkl2} for $\omega(n)$. In particular, one can establish that for any $\epsilon > 0$, the number of $n \in \mathcal{N}_h(x)$ that do not satisfy the inequality
$$(1-\epsilon) \log \log n \leq \omega_h(n) \leq (1+ \epsilon) \log \log n$$
is $o(|\mathcal{N}_h(x)|)$ as $x \rightarrow \infty$.

In \cite{ErdosKac}, Erd\H{o}s and Kac established a pioneering result that $\omega(n)$ obeys the Gaussian distribution over naturals. In particular, they proved
$$\lim_{x \rightarrow \infty} \frac{1}{x} \bigg| \left\{ n \leq x \ : \ \frac{\omega(n) - \log \log n}{\sqrt{\log \log n}} \leq a \right\} \bigg| = \Phi(a),$$
where 
\begin{equation}\label{phi(a)}
    \Phi(a) = \frac{1}{\sqrt{2 \pi}} \int_{-\infty}^a e^{-u^2/2} \ du.
\end{equation}
Following their work, various approaches to the Erd\H{o}s-Kac theorem have been pursued, for example, see \cite{pb,lg,gs,hh1,hh2,hh3,lz}. In \cite{dkl3}, the authors showed that $\omega(n)$ satisfies the Erd\H{o}s-Kac theorem over the subsets of $h$-free and $h$-full ideals of a number field, thus in particular, over $h$-free and $h$-full numbers. We extend this result to $\omega_1(n)$ over $h$-free numbers and $\omega_h(n)$ over $h$-full numbers. We prove the following two results:


%
%
\begin{thm}\label{erdoskacforomega1}
Let $x > 2$ be any real number and $h \geq 2$ be any integer. Let $\mathcal{S}_h(x)$ denote the set of $h$-free numbers less than or equal to $x$. Then for $a \in \mathbb{R}$, we have
$$\lim_{x \rightarrow \infty} \frac{1}{|\mathcal{S}_h(x)|} \bigg| \left\{ n \in \mathcal{S}_h(x) \ : \ \frac{\omega_1(n) - \log \log n}{\sqrt{\log \log n}} \leq a \right\} \bigg| = \Phi(a),$$
where $\Phi(a)$ is defined in \eqref{phi(a)}.
\end{thm}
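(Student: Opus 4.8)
The plan is to apply the method of moments: I would show that for every fixed integer $m \geq 0$, the normalized $m$-th moment
$$
\frac{1}{|\mathcal{S}_h(x)|} \sum_{\substack{n \leq x \\ n \in \mathcal{S}_h}} \left( \frac{\omega_1(n) - \log\log x}{\sqrt{\log\log x}} \right)^{m}
$$
converges as $x \to \infty$ to the $m$-th moment $\mu_m$ of the standard Gaussian (so $\mu_{2j} = (2j-1)!!$ and $\mu_{2j+1} = 0$). Since the Gaussian is determined by its moments (carrier theorem / Fréchet–Shohat), this yields convergence in distribution and hence the stated limit with $\Phi(a)$. Note that replacing $\log\log n$ by $\log\log x$ in the normalization is harmless: for $n \in (\sqrt{x}, x]$ one has $\log\log n = \log\log x + O(1/\log x)$, and the contribution of $n \leq \sqrt{x}$ is $O(|\mathcal{S}_h(x)|/\sqrt{x}) = o(|\mathcal{S}_h(x)|)$ since $\mathcal{S}_h$ has positive density $1/\zeta(h)$; this standard reduction needs to be carried out once at the start.

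The key computational input is an asymptotic formula for the mixed moments $\sum_{n \leq x,\ n \in \mathcal{S}_h} \omega_1(n)(\omega_1(n)-1)\cdots(\omega_1(n)-j+1)$, i.e. the factorial moments, for each fixed $j$. Writing $\omega_1(n) = \sum_{p} \mathbf{1}[p \| n,\ p^2 \nmid n]$ (indicator that $p$ divides $n$ exactly to the first power), the $j$-th factorial moment expands as a sum over ordered $j$-tuples of distinct primes $p_1, \dots, p_j$ of the count of $n \leq x$, $n \in \mathcal{S}_h$, with each $p_i$ exactly dividing $n$. The techniques already developed for Theorems 1.1 are exactly what is needed here: one detects the $h$-free condition via $\sum_{d^h \mid n} \mu(d)$, detects "$p_i$ exactly to the first power" via an inclusion–exclusion on $p_i^2 \mid n$, and evaluates the resulting main term through the associated Euler product. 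I expect this to produce
$$
\sum_{\substack{n \leq x \\ n \in \mathcal{S}_h}} \omega_1(n)(\omega_1(n)-1)\cdots(\omega_1(n)-j+1) = \frac{x}{\zeta(h)} (\log\log x)^{j} + O_{h,j}\!\left( x (\log\log x)^{j-1} \right),
$$
the point being that the local densities are $\sim 1/p$ for each prime, the sum over $j$-tuples of distinct primes up to (essentially) $x^{1/j}$ of $1/(p_1\cdots p_j)$ is $(\log\log x)^j + O((\log\log x)^{j-1})$, and the error terms from the truncation of primes, from the $\mu(d)$-sieve, and from primes that are forced to coincide are all absorbed into the lower-order term. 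The cases $j = 1, 2$ are already Theorem 1.1 (after converting between ordinary and factorial moments), so only the bookkeeping for general $j$ is new.

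The main obstacle is controlling the error term uniformly enough in the ordered-tuple sum to show the power of $(\log\log x)$ drops by exactly one in the remainder — in particular, handling the "diagonal" contributions where some of the $p_i$ would have to be equal (these must be shown to contribute $O(x(\log\log x)^{j-2})$ or smaller) and ensuring that extending each $p_i$ beyond its natural range $x^{1/(2j)}$ or so, as well as the $h$-freeness sieve error, costs at most a factor $(\log\log x)^{j-1}$. Once the factorial moments are in hand, passing to ordinary power moments is a routine finite linear algebra manipulation (Stirling numbers of the second kind), and after dividing by $|\mathcal{S}_h(x)| = x/\zeta(h) + O(x^{1/h})$ and normalizing by $(\log\log x)^{m/2}$, the leading term gives exactly $\mu_m$ while all corrections vanish in the limit. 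This is the standard Granville–Soundararajan / Billingsley route, and the only paper-specific work is importing the Euler-product evaluations already used for Theorems 1.1 and 1.2.
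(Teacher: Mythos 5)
Your proposal takes a genuinely different route from the paper, and as written it contains a gap that would prevent it from closing. The paper does not use the method of moments at all: it transfers the result from the already-known Erd\H{o}s--Kac theorem for $\omega(n)$ over $h$-free numbers (\cite[Theorem 1.4]{dkl3}). Since $\omega_1(n) \leq \omega(n)$, one inequality between the distribution functions is immediate; for the other direction the paper only needs the \emph{first} moment bound $\sum_{n \in \mathcal{S}_h(x)} (\omega(n) - \omega_1(n)) \ll x$ (which follows from comparing \eqref{hfreeomega} with \thmref{hfreeomega1}) together with a Chebyshev-type argument to show that the set of $n$ with $\omega(n) - \omega_1(n) > \epsilon\sqrt{\log\log n}$ has density zero in $\mathcal{S}_h(x)$. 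No second or higher moments are needed.

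The gap in your approach is the precision of the claimed factorial-moment asymptotic. You assert
$$
\sum_{\substack{n \leq x \\ n \in \mathcal{S}_h}} \omega_1(n)(\omega_1(n)-1)\cdots(\omega_1(n)-j+1) = \frac{x}{\zeta(h)} (\log\log x)^{j} + O_{h,j}\bigl( x (\log\log x)^{j-1} \bigr)
$$
and then claim that after centering by $\log\log x$ and normalizing by $(\log\log x)^{m/2}$ ``all corrections vanish in the limit.'' They do not. Writing $\mu = \log\log x$, an uncentered moment known only to precision $\mu^m + O(\mu^{m-1})$ yields, after the binomial expansion of $(\omega_1(n)-\mu)^m$, a centered moment controlled only to $O(\mu^{m-1})$; the Gaussian main term you are trying to isolate has size $\mu_m\,\mu^{m/2}$, which is swamped by $O(\mu^{m-1})$ as soon as $m \geq 3$. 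This is the standard pitfall of the moment method for Erd\H{o}s--Kac: one must either compute the moments of the centered, truncated sum $\sum_{p \leq z}\bigl(\mathbf{1}[p \,\Vert\, n] - \delta_p\bigr)$ directly, so that the cancellation occurs prime by prime inside the tuple sum (the Granville--Soundararajan route), or else determine the full polynomial expansion of each factorial moment in $\log\log x$ down to error $o((\log\log x)^{j/2})$. Either fix is feasible but is substantially more than the ``bookkeeping'' you describe, and neither is carried out in your sketch. Given that the Erd\H{o}s--Kac theorem for $\omega$ over $\mathcal{S}_h$ is available as a black box, the paper's comparison argument is both shorter and avoids this issue entirely.
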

\begin{thm}\label{erdoskacforomegah}
Let $x > 2$ be any real number and $h \geq 2$ be any integer. Let $\mathcal{N}_h(x)$ denote the set of $h$-free numbers less than or equal to $x$. Then for $a \in \mathbb{R}$, we have
$$\lim_{x \rightarrow \infty} \frac{1}{|\mathcal{N}_h(x)|} \bigg| \left\{ n \in \mathcal{N}_h(x) \ : \ \frac{\omega_h(n) - \log \log n}{\sqrt{\log \log n}} \leq a \right\} \bigg| = \Phi(a),$$
where $\Phi(a)$ is defined in \eqref{phi(a)}.
\end{thm}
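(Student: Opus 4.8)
The plan is to derive the Erd\H{o}s--Kac law for $\omega_h$ over $\mathcal{N}_h$ from the one for $\omega$ over $\mathcal{N}_h$ established in \cite{dkl3}, by showing that $\omega(n)$ and $\omega_h(n)$ differ by a quantity that is negligible on the scale $\sqrt{\log\log n}$ for almost every $h$-full $n\le x$. For $n\in\mathcal{N}_h$ we have $\omega(n)=\sum_{k\ge h}\omega_k(n)$, so that
$$g(n):=\omega(n)-\omega_h(n)=\sum_{k>h}\omega_k(n)=\#\{p:p^{h+1}\mid n\}\ge 0.$$
The first step is the mean-value bound $\sum_{n\in\mathcal{N}_h(x)}g(n)=O_h(x^{1/h})$. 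To see this directly: if $n\in\mathcal{N}_h$ and $p^{h+1}\mid n$, write $n=p^{a}m$ with $p\nmid m$, $a\ge h+1$, and $m\in\mathcal{N}_h\cup\{1\}$; using the standard bound $|\mathcal{N}_h(y)|\ll_h y^{1/h}+1$ and summing over $p$ and $a$ gives
$$\sum_{n\in\mathcal{N}_h(x)}g(n)\;\ll_h\;x^{1/h}\sum_{p}\frac{p^{-(h+1)/h}}{1-p^{-1/h}}\;+\;x^{1/(h+1)}\;\ll_h\;x^{1/h},$$
the $p$-series converging because $(h+1)/h>1$. (Alternatively, one may sum the first-moment asymptotics of \thmref{hfullomegak} over $h<k\le\log_2 x$, whose main terms telescope.) Since $|\mathcal{N}_h(x)|\asymp_h x^{1/h}$, the average of $g$ over $\mathcal{N}_h(x)$ is $O_h(1)$.

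Next I would isolate two exceptional subsets of $\mathcal{N}_h(x)$. Fix $\epsilon>0$. By Markov's inequality, the number of $n\in\mathcal{N}_h(x)$ with $g(n)>\epsilon\sqrt{\log\log x}$ is at most $\bigl(\epsilon\sqrt{\log\log x}\bigr)^{-1}\sum_{n\in\mathcal{N}_h(x)}g(n)\ll_h|\mathcal{N}_h(x)|/(\epsilon\sqrt{\log\log x})=o(|\mathcal{N}_h(x)|)$. Also $|\mathcal{N}_h(\sqrt x)|\ll_h x^{1/(2h)}=o(x^{1/h})$, so all but $o(|\mathcal{N}_h(x)|)$ elements $n$ of $\mathcal{N}_h(x)$ satisfy $n>\sqrt x$, whence $\log\log n=\log\log x+O(1)$ and $\sqrt{\log\log x}/\sqrt{\log\log n}\to1$. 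Deleting both exceptional sets leaves a set $G_x\subseteq\mathcal{N}_h(x)$ with $|G_x|=(1+o(1))|\mathcal{N}_h(x)|$ on which, uniformly, $0\le (\omega(n)-\omega_h(n))/\sqrt{\log\log n}\le\epsilon(1+o(1))$.

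Finally, on $G_x$ the inequality $(\omega_h(n)-\log\log n)/\sqrt{\log\log n}\le a$ is squeezed between $(\omega(n)-\log\log n)/\sqrt{\log\log n}\le a$ and $(\omega(n)-\log\log n)/\sqrt{\log\log n}\le a+\epsilon+o(1)$. Combining this with the Erd\H{o}s--Kac theorem for $\omega$ over $\mathcal{N}_h$ from \cite{dkl3} and absorbing the $o(|\mathcal{N}_h(x)|)$ elements discarded from $\mathcal{N}_h(x)\setminus G_x$, we obtain
$$\Phi(a-\epsilon)+o(1)\;\le\;\frac{1}{|\mathcal{N}_h(x)|}\,\biggl|\Bigl\{n\in\mathcal{N}_h(x):\tfrac{\omega_h(n)-\log\log n}{\sqrt{\log\log n}}\le a\Bigr\}\biggr|\;\le\;\Phi(a+\epsilon)+o(1).$$
Letting $x\to\infty$ and then $\epsilon\to0$, and invoking the continuity of $\Phi$, gives the theorem. (The same transfer proves \thmref{erdoskacforomega1}: over $\mathcal{S}_h$ one has $\omega(n)-\omega_1(n)=\sum_{2\le k\le h-1}\omega_k(n)$, whose sum over $\mathcal{S}_h(x)$ is $O_h(x^{1/2})=o(|\mathcal{S}_h(x)|)$ by \thmref{hfreeomegak}, and $\mathcal{S}_h$ has positive density.) I expect no serious obstacle here: the only points demanding care are the uniform two-sided squeeze on $G_x$ and the replacement of $\log\log n$ by $\log\log x$, while the one genuine input beyond \cite{dkl3} --- the bound $\sum_{n\in\mathcal{N}_h(x)}(\omega(n)-\omega_h(n))\ll_h x^{1/h}$ --- is modest and follows either directly or from \thmref{hfullomegak}.
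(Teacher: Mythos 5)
Your argument is correct and is essentially the paper's own proof: both transfer the Erd\H{o}s--Kac law from $\omega$ to $\omega_h$ over $\mathcal{N}_h$ via the mean-value bound $\sum_{n\in\mathcal{N}_h(x)}(\omega(n)-\omega_h(n))\ll_h x^{1/h}$ (the paper gets it by subtracting \eqref{hfullomega} and \thmref{hfullomegak}, you also offer a direct count), a Markov-type bound on the exceptional set where the difference exceeds $\epsilon\sqrt{\log\log n}$ after discarding the small $n$, and the monotonicity $\omega_h(n)\le\omega(n)$ for the squeeze. One small slip in your parenthetical aside about $\mathcal{S}_h$: by \thmref{hfreeomegak} the sum $\sum_{n\in\mathcal{S}_h(x)}(\omega(n)-\omega_1(n))=\sum_{k=2}^{h-1}\sum_{n\in\mathcal{S}_h(x)}\omega_k(n)$ is $\asymp_h x$, not $O_h(x^{1/2})$, but since it is still $O(|\mathcal{S}_h(x)|)$ the same Markov argument applies and that transfer goes through unchanged.
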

Unlike $\omega_1(n)$, we observe that $\omega_k(n)$ for $1 < k < h$ does not have a normal order. This goes in accordance with the findings of Elma and the third author \cite{el} where they proved that $\omega_k(n)$ for $k > 1$ does not have a normal order over natural numbers. In particular, we prove:
\begin{thm}\label{nonormalorderhfree}
    For any integer $h \geq 2$ and any integer $k$ satisfying $1 < k < h$, the function $\omega_k(n)$ does not have normal order $F(n)$ for any non-decreasing function $F : \mathcal{S}_h \rightarrow \mathbb{R}_{\geq 0}$.
\end{thm}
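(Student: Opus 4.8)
The plan is to show that $\omega_k(n)$ fails to have normal order over $\mathcal{S}_h$ by exhibiting, for a positive proportion of $n \in \mathcal{S}_h(x)$, the value $\omega_k(n) = 0$, while simultaneously showing that $\omega_k(n)$ is unbounded on a set of $n \in \mathcal{S}_h$ that is not $o(|\mathcal{S}_h(x)|)$ — in fact on a positive proportion as well. If $F$ were a normal order, then for $n$ with $\omega_k(n) = 0$ we would need $(1-\epsilon)F(n) \le 0$, forcing $F(n) = 0$ for all but $o(|\mathcal{S}_h(x)|)$ values of $n$; but then the inequality $\omega_k(n) \le (1+\epsilon)F(n) = 0$ would fail on every $n$ with $\omega_k(n) \ge 1$, and we will show there are $\gg |\mathcal{S}_h(x)|$ such $n$. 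The contradiction is immediate once both density statements are in hand.

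First I would establish that $\#\{n \in \mathcal{S}_h(x) : \omega_k(n) = 0\} \gg |\mathcal{S}_h(x)|$. The first-moment estimate of \thmref{hfreeomegak} already gives $\sum_{n \in \mathcal{S}_h(x)} \omega_k(n) = c_k \, x/\zeta(h) + O_{h,k}(x^{1/k}/\log x)$ with $c_k = \sum_p (p^h - p^{h-1})/(p^k(p^h-1)) > 0$ a constant; since $|\mathcal{S}_h(x)| = x/\zeta(h) + O(\sqrt{x})$, the average of $\omega_k$ over $\mathcal{S}_h(x)$ tends to the constant $c_k$. By Markov's inequality (or a direct counting argument), the number of $n \in \mathcal{S}_h(x)$ with $\omega_k(n) \ge 1$ is $\le \sum \omega_k(n) \le (c_k + o(1))|\mathcal{S}_h(x)|$; more importantly, I want $c_k < 1$ so that a \emph{positive} proportion have $\omega_k(n) = 0$. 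This needs a short estimate: $c_k \le \sum_p p^{h-1}/(p^k(p^h-1)) \le \sum_p 1/p^{k+1} \cdot p^h/(p^h-1) \le 2 \sum_p p^{-(k+1)}$ for $p \ge 2$, and since $k \ge 2$ this is at most $2(2^{-3} + 3^{-3} + \cdots) < 2 \cdot (\zeta(3) - 1) < 1$. Hence $\#\{n \in \mathcal{S}_h(x): \omega_k(n) = 0\} \ge (1 - c_k - o(1))|\mathcal{S}_h(x)| \gg |\mathcal{S}_h(x)|$.

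Next I would show $\#\{n \in \mathcal{S}_h(x) : \omega_k(n) \ge 1\} \gg |\mathcal{S}_h(x)|$. Fix a prime $q$ (say $q = 2$, noting $k < h$ so $q^k$ is $h$-free). Write $n = q^k m$ with $q \nmid m$ and $m$ being $h$-free; then $\omega_k(n) \ge 1$ provided $q$ has multiplicity exactly $k$ in $n$, which holds precisely when $q \nmid m$. The count of such $n \le x$ is the number of $h$-free $m \le x/q^k$ with $q \nmid m$, which by a standard sieve (or inclusion–exclusion on $q$-th and $h$-th powers) is $\big(\tfrac{1}{\zeta(h)} \cdot \tfrac{1}{q^k}\cdot \tfrac{q-1}{q}\cdot(1 + o(1))\big) x$ — wait, one must be careful that $m$ coprime to $q$ and $h$-free has density $\tfrac{1}{\zeta(h)} \cdot \tfrac{q^h - 1}{q^h} \cdot \tfrac{q-1}{q} \big/ \tfrac{q^h-1}{q^h}$; in any case it is a fixed positive multiple of $x$. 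Therefore $\#\{n \in \mathcal{S}_h(x): \omega_k(n) \ge 1\} \gg_q x \gg |\mathcal{S}_h(x)|$.

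Finally I would assemble the contradiction as in the first paragraph: if $F$ is a non-decreasing normal order for $\omega_k$ over $\mathcal{S}_h$, apply the defining property with, say, $\epsilon = 1/2$. The set $A$ of $n \in \mathcal{S}_h(x)$ violating $(1-\epsilon)F(n) \le \omega_k(n) \le (1+\epsilon)F(n)$ has size $o(|\mathcal{S}_h(x)|)$. But every $n$ with $\omega_k(n)=0$ and $F(n)>0$ lies in $A$ (since $(1-\epsilon)F(n) > 0 = \omega_k(n)$), and every $n$ with $\omega_k(n) \ge 1$ and $F(n) = 0$ lies in $A$ (since $\omega_k(n) > 0 = (1+\epsilon)F(n)$). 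Since the $n$ with $\omega_k(n) = 0$ form a positive proportion and those with $\omega_k(n) \ge 1$ form a positive proportion, $A$ must contain a positive proportion of $\mathcal{S}_h(x)$ regardless of which of these $n$ have $F(n) = 0$ — a contradiction with $|A| = o(|\mathcal{S}_h(x)|)$. The main obstacle is purely the quantitative one: verifying $c_k < 1$ (so that $\omega_k = 0$ on a positive proportion) and nailing down the positive-density lower bound for $\{\omega_k \ge 1\}$ via the sieve; everything else is a soft packaging argument.
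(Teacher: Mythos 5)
Your overall strategy is essentially the paper's: establish that $\{n\in\mathcal{S}_h(x):\omega_k(n)=0\}$ and $\{n\in\mathcal{S}_h(x):\omega_k(n)\geq 1\}$ each have positive proportion, then split on whether $F$ is identically zero. Your second density bound (taking $n=2^km$ with $m$ odd and $h$-free, counted via the coprimality lemma) is the same construction the paper uses. The one genuinely different ingredient is your first density bound: the paper simply observes $\mathcal{S}_k(x)\subseteq\{n\in\mathcal{S}_h(x):\omega_k(n)=0\}$ and uses $|\mathcal{S}_k(x)|\gg x/\zeta(k)$, whereas you go through the first moment and Markov's inequality, which forces you to verify $c_k<1$. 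That works, but your verification contains a false step: $p^h-p^{h-1}\leq p^{h-1}$ fails for every $p\geq 3$. The conclusion survives with the trivial bound $\frac{p^h-p^{h-1}}{p^h-1}<1$, giving $c_k<\sum_p p^{-k}\leq P(2)<1$; still, the containment argument is both shorter and sharper.

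The more substantive issue is your closing claim that $A$ must have positive proportion ``regardless of which of these $n$ have $F(n)=0$.'' For an arbitrary function $F$ this is false: $F$ could vanish exactly on $\{\omega_k=0\}$ and equal $\omega_k(n)$ elsewhere, making $A$ empty. The theorem is only true because $F$ is non-decreasing, and you must invoke that explicitly: either $F\equiv 0$, in which case every $n$ with $\omega_k(n)\geq 1$ is exceptional, or there is an $n_0$ with $F(n)>0$ for all $n\geq n_0$, in which case every $n\geq n_0$ with $\omega_k(n)=0$ is exceptional. Your opening paragraph gestures at the right mechanism (``forcing $F(n)=0$ for all but $o(\cdot)$ values'' only propagates to all $n$ via monotonicity), but the final assembly as written elides the one place where the hypothesis on $F$ is actually used. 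With that dichotomy made explicit, your proof is complete.
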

Finally, we show that $\omega_k(n)$ for $k > h$ does not have a normal order over $h$-full numbers. In particular, we prove:
\begin{thm}\label{nonormalorderhfull}
    For any integer $h \geq 2$ and any integer $k > h$, the function $\omega_k(n)$ does not have normal order $F(n)$ for any non-decreasing function $F : \mathcal{N}_h \rightarrow \mathbb{R}_{\geq 0}$.
\end{thm}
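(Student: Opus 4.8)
The plan is to mimic the standard argument showing $\omega_k$ has no normal order for $k>1$ over $\mathbb{N}$, adapted to the $h$-full setting, by exhibiting a positive proportion of $h$-full $n\le x$ on which $\omega_k(n)$ is ``too small'' and a positive proportion on which it is ``too large,'' which is incompatible with the existence of any non-decreasing $F$ satisfying the normal-order inequality. Suppose, for contradiction, that $\omega_k(n)$ has normal order $F(n)$ over $\mathcal{N}_h$ for some non-decreasing $F$. The first step is to show that $F(n)$ must tend to infinity along $\mathcal{N}_h$: from \thmref{hfullomegak} and \thmref{hfullomegak^2} the first and second moments of $\omega_k$ over $\mathcal{N}_h(x)$ are both $\asymp_{h,k} x^{1/h}$, i.e.\ $\asymp |\mathcal{N}_h(x)|$; hence a positive proportion of $n\in\mathcal{N}_h(x)$ have $\omega_k(n)\ge 1$, and these $n$ form an infinite sequence, so if $F$ were bounded on $\mathcal{N}_h$ a Chebyshev-type variance estimate would be needed to rule it out — more simply, combining the two moment estimates gives that the variance of $\omega_k$ over $\mathcal{N}_h(x)$ is $O_{h,k}(x^{1/h})$ while the mean is $\gg_{h,k} x^{1/h}/|\mathcal{N}_h(x)| \asymp$ a constant, so $\omega_k$ is bounded in $L^2$-average; this forces $F$ to be bounded, and then we argue directly with a constant target.

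The cleaner route, which I would actually carry out, is the following. Because $k>h$, for each prime $p$ the number $p^k$ is $h$-full, and more generally for any finite set of primes $p_1,\dots,p_j$ the number $m=p_1^k\cdots p_j^k$ is $h$-full with $\omega_k(m)=j$. Using the asymptotic count $|\mathcal{N}_h(x)|\sim \gamma_{0,h}\,x^{1/h}$ together with the multiplicative structure, I would show: (i) a positive proportion of $n\in\mathcal{N}_h(x)$ satisfy $\omega_k(n)=0$ — indeed the $h$-full numbers all of whose prime exponents are $h$ or $h+1$ (say) already have density a fixed positive constant times $|\mathcal{N}_h(x)|$ when $k\ge h+2$, and when $k=h+1$ one instead uses $n$ whose exponents lie in $\{h\}\cup\{h+2,h+3,\dots\}$, which by a standard Euler-product/Tauberian argument still form a positive proportion; and (ii) a positive proportion of $n\in\mathcal{N}_h(x)$ satisfy $\omega_k(n)\ge 1$ — take $n=p^k m$ where $p$ is a small prime and $m$ ranges over $h$-full numbers coprime to $p$ up to $x/p^k$, which contributes $\gg_{h,k} x^{1/h}$ such $n$. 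Steps (i) and (ii) together are the core obstruction: a non-decreasing $F$ forced to satisfy $(1-\epsilon)F(n)\le \omega_k(n)\le (1+\epsilon)F(n)$ off an $o(|\mathcal{N}_h(x)|)$ set cannot simultaneously be compatible with $\omega_k(n)=0$ on a positive proportion (which forces $F(n)=0$ there, hence $F\equiv 0$ on a cofinal set by monotonicity) and $\omega_k(n)\ge 1$ on a positive proportion (which forces $F(n)\ge 1/(1+\epsilon)$ there); since both sets are cofinal in $\mathcal{N}_h$, monotonicity of $F$ yields a contradiction.

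For the proofs of (i) and (ii) I would set up the generating Dirichlet series. Writing $f_h(s)=\sum_{n\in\mathcal{N}_h} n^{-s}$, one has the Euler product $f_h(s)=\prod_p\bigl(1+p^{-hs}+p^{-(h+1)s}+\cdots\bigr)=\prod_p\bigl(1+\tfrac{p^{-hs}}{1-p^{-s}}\bigr)$, whose rightmost singularity is at $s=1/h$ with a simple pole, giving $|\mathcal{N}_h(x)|\sim\gamma_{0,h}x^{1/h}$ by a Tauberian theorem (this is exactly the input behind \eqref{hfullomega}). To count $n\in\mathcal{N}_h(x)$ with no prime of exponent exactly $k$, I would remove the $p^{-ks}$ term from the local factor at each $p$, obtaining a series $\prod_p\bigl(1+\tfrac{p^{-hs}}{1-p^{-s}}-p^{-ks}\bigr)$ which has the \emph{same} pole at $s=1/h$ with a positive residue (the ratio of the two Euler products converges to a nonzero constant at $s=1/h$ since $k/h>1$), so these $n$ also number $\gg_{h,k}x^{1/h}$; similarly restricting to $n$ divisible by a fixed $p^k$ multiplies the count by a positive constant. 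The main obstacle is purely bookkeeping: making sure the modified Euler products converge in a half-plane slightly to the left of $\mathrm{Re}(s)=1/h$ so that the Tauberian/contour argument applies uniformly, and checking the $k=h+1$ boundary case where one must avoid exponent $h+1$ — but this is handled by the same device of deleting one term from each local factor, and I expect no genuine difficulty there.
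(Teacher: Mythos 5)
Your proposal is correct, and its logical skeleton coincides with the paper's: exhibit one subset of $\mathcal{N}_h(x)$ of size $\gg_{h,k}x^{1/h}$ on which $\omega_k(n)=0$ (which, via monotonicity, forces $F\equiv 0$ if a normal order existed) and another of size $\gg_{h,k}x^{1/h}$ on which $\omega_k(n)\geq 1$ (incompatible with $F\equiv 0$); the paper merely packages this as a case split on whether $F$ is identically zero. The genuine difference is how the two density facts are established. You propose to prove them from scratch by modifying the Euler product of $\sum_{n\in\mathcal{N}_h}n^{-s}$ (deleting the $p^{-ks}$ term from each local factor, or restricting exponents to $\{h,h+1\}$, etc.) and invoking a Tauberian theorem at the simple pole $s=1/h$, using $k/h>1$ to see the ratio of Euler products converges; this works but is the heavier route, since you would still owe the reader the continuation slightly to the left of $\Re(s)=1/h$ and the attendant error-term bookkeeping, including your $k=h+1$ boundary case. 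The paper instead observes that $\{n\in\mathcal{N}_h:\omega_k(n)=0\}\supseteq\mathcal{N}_h\cap\mathcal{S}_k$ and quotes a ready-made count of $h$-full, $k$-free integers coprime to a fixed prime (\lmaref{hfullkfree}, due to Jakimczuk--Lal\'in), and for the second set counts $n=2^km$ with $m\in\mathcal{N}_h\cap\mathcal{S}_k$ odd, which yields $\omega_k(n)=1$ exactly; everything then reduces to citations of \lmaref{hfullkfree} and \lmaref{Aqhlemma}. Two small remarks: your opening paragraph about whether $F$ tends to infinity or is bounded is a red herring that your actual argument never uses and should be cut; and in step (ii) it is worth stating explicitly that you fix a single prime $p=2$ so that the map $m\mapsto 2^km$ is injective and the resulting $n$ all lie in $\mathcal{N}_h(x)$.
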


For a natural number $n$, let $\Omega(n)$ denote the number of prime factors of $n$ counted with multiplicity. In particular, for the representation of $n$ given in \eqref{factorization}, $\Omega(n) = \sum_{i=1}^r s_i$. Let $\Omega_k(n)$ be the number of prime factors of $f$ with a given multiplicity $k \geqslant 1$. Note that $\Omega_k(n) = k \cdot \omega_k(n)$ and 
$\Omega(n)=\sum_{k\geqslant 1} \Omega_k(n)$ for all $n \in \mathbb{N}$. One can deduce similar results for $\Omega_k(n)$ as our results in this paper. In particular, one can prove that $\Omega_1(n)$ has normal order $\log \log n$ over $h$-free numbers, $\Omega_h(n)$ has normal order $h \log \log n$ over $h$-full numbers, and both satisfy the Erd\H{o}s-Kac Theorem. One can also prove that the functions $\Omega_k(n)$ with $1 < k < h$ do not have normal order over $h$-free numbers and $\Omega_k(n)$ with $k > h$ do not have normal order over $h$-full numbers.
\section{Lemmata}
First, we recall the following results necessary for our study:
\begin{lma}\label{primepower}\cite[Lemma 1.2]{aler}
If $k > 1$ be any real number. Then
$$\sum_{p \geq x} \frac{1}{p^k} = \frac{1}{(k-1)x^{k-1} (\log x)} + O \left( \frac{1}{x^{k-1} (\log x)^2} \right).$$
\end{lma}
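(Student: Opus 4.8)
The plan is to prove this tail estimate by Abel summation against the prime-counting function $\pi(t) := \#\{p \le t\}$, followed by the Prime Number Theorem. First I would write, for $y > x$,
\[
\sum_{x < p \le y} \frac{1}{p^k} = \frac{\pi(y)}{y^k} - \frac{\pi(x)}{x^k} + k \int_x^y \frac{\pi(t)}{t^{k+1}}\, dt,
\]
and let $y \to \infty$. Since $k > 1$ and $\pi(t) = O(t/\log t)$, the boundary term $\pi(y)/y^k$ vanishes in the limit, giving
\[
\sum_{p > x} \frac{1}{p^k} = -\frac{\pi(x)}{x^k} + k \int_x^\infty \frac{\pi(t)}{t^{k+1}}\, dt.
\]
The difference between $\sum_{p > x}$ and $\sum_{p \ge x}$ is at most $x^{-k} = O(x^{-(k-1)}(\log x)^{-2})$, hence is absorbed into the stated error term, so it suffices to treat the sum over $p > x$.

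Next I would insert the Prime Number Theorem in the form $\pi(t) = t/\log t + O(t/(\log t)^2)$. The boundary term then becomes $-\pi(x)/x^k = -x^{-(k-1)}/\log x + O(x^{-(k-1)}(\log x)^{-2})$, while the integral splits into a main piece $k\int_x^\infty t^{-k}(\log t)^{-1}\, dt$ and an error $O\!\left(\int_x^\infty t^{-k}(\log t)^{-2}\, dt\right)$ coming from the PNT remainder.

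The heart of the argument is evaluating $I(x) := \int_x^\infty t^{-k}(\log t)^{-1}\, dt$ to the required precision. I would integrate by parts with $u = 1/\log t$ and $dv = t^{-k}\, dt$ (so $v = -t^{-(k-1)}/(k-1)$, valid for any real $k > 1$), producing
\[
I(x) = \frac{1}{(k-1)\,x^{k-1}\log x} - \frac{1}{k-1}\int_x^\infty \frac{dt}{t^k (\log t)^2}.
\]
The remaining integral, and likewise the PNT error integral, is controlled crudely by bounding $(\log t)^{-2} \le (\log x)^{-2}$ on $[x,\infty)$ together with $\int_x^\infty t^{-k}\, dt = x^{-(k-1)}/(k-1)$, which yields $\int_x^\infty t^{-k}(\log t)^{-2}\, dt = O(x^{-(k-1)}(\log x)^{-2})$. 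Hence $kI(x) = \frac{k}{(k-1)\,x^{k-1}\log x} + O(x^{-(k-1)}(\log x)^{-2})$.

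Finally I would combine the boundary term with $kI(x)$; the coefficient of the leading term $x^{-(k-1)}/\log x$ is then $-1 + k/(k-1) = 1/(k-1)$, giving exactly
\[
\sum_{p > x} \frac{1}{p^k} = \frac{1}{(k-1)\,x^{k-1}\log x} + O\!\left( \frac{1}{x^{k-1}(\log x)^2} \right).
\]
The only real subtlety is keeping the two integrations by parts consistent and spotting the clean cancellation $-1 + k/(k-1) = 1/(k-1)$ that produces the stated constant; everything else is routine size estimation in which each discarded quantity is $O(x^{-(k-1)}(\log x)^{-2})$.
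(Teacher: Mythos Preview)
Your proof is correct; it is precisely the standard Abel-summation argument against $\pi(t)$ together with the Prime Number Theorem in the form $\pi(t)=t/\log t+O(t/(\log t)^2)$, and every step checks out, including the cancellation $-1+k/(k-1)=1/(k-1)$. The paper itself gives no proof of this lemma, merely citing \cite[Lemma~1.2]{aler}, so your argument supplies exactly what the reference would contain; the only minor point worth noting is that the implied constant in your final $O$-term depends on $k$ (through factors like $k/(k-1)$), which is consistent with how the lemma is used in the paper.
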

\begin{lma}\label{primepowerleqx}\cite[Lemma 2.2]{dkl2}
Let $\alpha > 0$ be any real number satisfying $0 < \alpha < 1$. Then
$$\sum_{p \leq x} \frac{1}{p^\alpha} = O_\alpha \left( \frac{x^{1-\alpha}}{\log x} \right).$$
\end{lma}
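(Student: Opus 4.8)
The plan is to prove this bound by partial summation combined with Chebyshev's upper bound on the prime-counting function. Write $\pi(t) = \sum_{p \le t} 1$. Applying Abel summation to the sequence $a_p = 1$ supported on the primes with the smooth weight $f(t) = t^{-\alpha}$, I obtain
$$\sum_{p \le x} \frac{1}{p^\alpha} = \frac{\pi(x)}{x^\alpha} + \alpha \int_2^x \frac{\pi(t)}{t^{\alpha+1}} \, dt.$$
The task then reduces to bounding the boundary term and the integral, and for both I would only invoke the elementary Chebyshev estimate $\pi(t) \ll t/\log t$, valid for all $t \ge 2$. No appeal to the full Prime Number Theorem is needed, which is precisely why the implied constant can be taken to depend only on $\alpha$.

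For the boundary term, Chebyshev's bound gives at once
$$\frac{\pi(x)}{x^\alpha} \ll \frac{x/\log x}{x^\alpha} = \frac{x^{1-\alpha}}{\log x},$$
which is already of the claimed order. For the integral, substituting $\pi(t) \ll t/\log t$ yields
$$\alpha \int_2^x \frac{\pi(t)}{t^{\alpha+1}} \, dt \ll_\alpha \int_2^x \frac{dt}{t^\alpha \log t}.$$
The $\log t$ in the denominator is the only genuine subtlety here: since $0 < \alpha < 1$, the factor $t^{-\alpha}$ decays slowly and the mass of the integral concentrates near the upper limit, so one cannot simply pull the logarithm out over the entire range $[2,x]$.

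To handle this I would split the integral at $t = \sqrt{x}$. On $[2, \sqrt{x}]$ I use $\log t \ge \log 2$ and integrate $t^{-\alpha}$ directly, giving a contribution $O_\alpha\!\left(x^{(1-\alpha)/2}\right)$, which is $o\!\left(x^{1-\alpha}/\log x\right)$ because $\tfrac{x^{(1-\alpha)/2}\log x}{x^{1-\alpha}} = \tfrac{\log x}{x^{(1-\alpha)/2}} \to 0$, hence negligible. On $[\sqrt{x}, x]$ I use $\log t \ge \tfrac12 \log x$ to extract the logarithm, leaving
$$\frac{2}{\log x} \int_{\sqrt{x}}^x t^{-\alpha}\,dt \le \frac{2}{(1-\alpha)} \cdot \frac{x^{1-\alpha}}{\log x} = O_\alpha\!\left(\frac{x^{1-\alpha}}{\log x}\right).$$
Adding the two ranges gives $\int_2^x t^{-\alpha}(\log t)^{-1}\,dt = O_\alpha\!\left(x^{1-\alpha}/\log x\right)$, and combining with the boundary term completes the proof. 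The main obstacle, such as it is, is purely the logarithmic weight in the integral, which this splitting at $\sqrt{x}$ disposes of cleanly; every remaining step is routine.
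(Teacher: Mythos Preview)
Your argument is correct. The partial summation setup is right, the Chebyshev bound is all that is needed, and the split at $\sqrt{x}$ handles the logarithmic weight cleanly; every estimate you write down is valid and the dependence of the implied constants on $\alpha$ is tracked properly.

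As for comparison: the paper does not actually prove this lemma. It is quoted verbatim from \cite[Lemma 2.2]{dkl2} and no argument is given in the present paper. So there is nothing to compare your approach to here; your proof simply supplies what the paper outsources. For what it is worth, partial summation plus Chebyshev is exactly the standard way to establish this bound, so your route is the expected one.
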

\begin{lma}\label{needhfullomegak}
 Let $h \geq 2$ be a fixed integer. Let $y > 2$ and $r > h$ be fixed real numbers. Then
 $$\sum_{p \leq y} \frac{1}{p^{r/h} \left( 1 - p^{-1/h} + p^{-1} \right)} = \mathcal{L}_h(r) + O_{h,r} \left( \frac{1}{y^{\frac{r}{h} - 1} (\log y)}\right),$$
 where $\mathcal{L}_h(r)$ is the convergent sum defined in \eqref{lhr} as
 $$\mathcal{L}_h(r) := \sum_p \frac{1}{p^{(r/h)-1} \left( p - p^{1-1/h} + 1 \right)},$$
and $O_{h,r}$ means that the implied constant depends on both $h$ and $r$.
\end{lma}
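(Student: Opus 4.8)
The plan is to recognize that the summand on the left is precisely the general term of the convergent series $\mathcal{L}_h(r)$, so that the lemma reduces to a tail estimate. First I would clear the denominator: multiplying numerator and denominator of $\frac{1}{p^{r/h}(1 - p^{-1/h} + p^{-1})}$ by $p$ gives $\frac{1}{p^{(r/h)-1}(p - p^{1-1/h} + 1)}$, which is exactly the term appearing in the definition \eqref{lhr} of $\mathcal{L}_h(r)$. Hence the left-hand side of the lemma equals $\mathcal{L}_h(r) - \sum_{p > y}\frac{1}{p^{(r/h)-1}(p - p^{1-1/h}+1)}$, and it suffices to show $\sum_{p > y}\frac{1}{p^{(r/h)-1}(p - p^{1-1/h}+1)} = O_{h,r}\!\left( y^{1-r/h}/\log y\right)$.

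Next I would bound the denominator from below. For every prime $p \ge 2$ one has $p^{-1/h} \le 2^{-1/h} < 1$, so with $c_h := 1 - 2^{-1/h} > 0$ we get $p - p^{1-1/h} + 1 \ge p - p^{1-1/h} = p(1 - p^{-1/h}) \ge c_h\, p$. Consequently each tail term is at most $c_h^{-1} p^{-r/h}$, whence $\sum_{p > y}\frac{1}{p^{(r/h)-1}(p - p^{1-1/h}+1)} \le c_h^{-1}\sum_{p>y} p^{-r/h}$. Since $r > h$, the exponent $k := r/h$ satisfies $k > 1$, so \lmaref{primepower} applies and gives $\sum_{p > y} p^{-r/h} = \frac{1}{(r/h-1)\,y^{r/h-1}\log y} + O\!\left( \frac{1}{y^{r/h-1}(\log y)^2}\right) = O_{h,r}\!\left( \frac{1}{y^{r/h-1}\log y}\right)$, the secondary error term being absorbed since $\log y \ge \log 2$ for $y > 2$. (If $y$ happens to be prime, replacing $\sum_{p\ge y}$ by $\sum_{p>y}$ costs one term of size $O(y^{-r/h})$, of smaller order.) Combining the two estimates yields the asserted tail bound, hence the lemma.

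There is essentially no serious obstacle here; the only points needing a moment's care are the algebraic identity that matches the summand with the general term of $\mathcal{L}_h(r)$ (which, as a byproduct, re-proves convergence of the series, since $r/h > 1$) and the uniformity of the implied constants, which depend on $h$ and $r$ only through $c_h$ and through the constant in \lmaref{primepower} with $k = r/h$. For $y$ confined to a bounded range the asserted estimate is trivial, since then the tail is bounded while $y^{1-r/h}/\log y$ stays bounded away from $0$; so one may freely assume $y$ is as large as needed when invoking \lmaref{primepower}.
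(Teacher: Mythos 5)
Your proposal is correct and follows essentially the same route as the paper: rewrite the partial sum as $\mathcal{L}_h(r)$ minus a tail, bound the tail by $O_h\bigl(\sum_{p>y} p^{-r/h}\bigr)$ using the lower bound on the denominator, and apply \lmaref{primepower} with $k = r/h$. The only difference is that you spell out the denominator bound $p - p^{1-1/h} + 1 \geq (1-2^{-1/h})\,p$ explicitly, which the paper leaves implicit.
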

\begin{proof}
Note that
\begin{align*}
    \sum_{p \leq y} \frac{1}{p^{r/h} \left( 1 - p^{-1/h} + p^{-1} \right)} 
    & = \mathcal{L}_h(r) + O_h \left( \sum_{p > y} \frac{1}{p^{r/h}} \right).
\end{align*}
Applying \lmaref{primepower} with $k = r/h$ completes the proof.
\end{proof}
\begin{lma}\label{sumplogp}\cite[Exercise 9.4.4]{murty}
    For $x > 2$, we have
    $$\sum_{p \leq x/2} \frac{1}{p \log (x/p)} = O \left( \frac{\log \log x}{\log x} \right).$$
\end{lma}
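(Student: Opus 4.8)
The plan is to split the sum at $\sqrt{x}$ and treat the two resulting ranges by different mechanisms: Mertens' theorem for the small primes, and a dyadic decomposition paired with Chebyshev's upper bound for the large primes. Throughout, note that since $p \le x/2$ we have $x/p \ge 2$, hence $\log(x/p) \ge \log 2 > 0$, so every term is well defined and positive.

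First I would handle the range $p \le \sqrt{x}$. Here $x/p \ge \sqrt{x}$, so $\log(x/p) \ge \tfrac12 \log x$, and therefore
$$\sum_{p \le \sqrt{x}} \frac{1}{p \log(x/p)} \le \frac{2}{\log x} \sum_{p \le \sqrt{x}} \frac{1}{p}.$$
By Mertens' theorem, $\sum_{p \le \sqrt{x}} 1/p = \log\log\sqrt{x} + O(1) = O(\log\log x)$, so this part already contributes the desired $O(\log\log x / \log x)$.

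The main work, and the main obstacle, lies in the range $\sqrt{x} < p \le x/2$, where $1/p$ is small but $\log(x/p)$ can be as small as a constant. Here I would set $q = x/p$, so $q$ runs over $[2,\sqrt{x})$ and $\log(x/p) = \log q$, and decompose dyadically: for $j \ge 1$, consider those $p$ with $x/2^{j+1} < p \le x/2^{j}$, equivalently $2^{j} \le q < 2^{j+1}$. On this block one has $\log(x/p) = \log q \ge j \log 2$ and $1/p < 2^{j+1}/x$, while Chebyshev's bound gives that the number of primes up to $x/2^{j}$ is $O\!\left( x/(2^{j}\log x) \right)$, valid since $x/2^{j} \ge \sqrt{x}$ on the relevant range $j \le \tfrac12 \log_2 x$. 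Multiplying these three estimates, the $j$-th block contributes
$$\sum_{x/2^{j+1} < p \le x/2^{j}} \frac{1}{p \log(x/p)} = O\!\left( \frac{2^{j+1}}{x} \cdot \frac{1}{j \log 2} \cdot \frac{x}{2^{j} \log x} \right) = O\!\left( \frac{1}{j \log x} \right).$$

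Finally I would sum over the $O(\log x)$ admissible blocks $1 \le j \le \tfrac12 \log_2 x$. Since $\sum_{j \le J} 1/j = O(\log J)$ with $J = O(\log x)$, this yields
$$\sum_{\sqrt{x} < p \le x/2} \frac{1}{p \log(x/p)} = O\!\left( \frac{1}{\log x} \sum_{j \le \frac12 \log_2 x} \frac{1}{j} \right) = O\!\left( \frac{\log\log x}{\log x} \right),$$
and combining the two ranges gives the claim. The delicate point is precisely the bookkeeping near $p \approx x$: one must exploit the $1/p$ decay to overcome the growth in the number of primes per block, and the harmonic sum $\sum 1/j$ over the $O(\log x)$ blocks is exactly what produces the extra $\log\log x$ factor, so this method cannot be pushed to $O(1/\log x)$.
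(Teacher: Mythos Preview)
Your argument is correct. The paper itself does not supply a proof of this lemma: it is quoted verbatim as \cite[Exercise~9.4.4]{murty} and used as a black box, so there is no in-paper proof to compare against. Your split at $\sqrt{x}$, with Mertens' theorem on the small range and a dyadic decomposition combined with Chebyshev's bound $\pi(y)\ll y/\log y$ on the large range, is a standard and fully rigorous way to carry out that exercise; each step checks out, and the harmonic sum over $O(\log x)$ dyadic blocks is indeed what produces the $\log\log x$ factor.
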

\begin{lma}\label{saidak}\cite[Lemma 2.4]{dkl2}
  Let $p$ and $q$ denote prime numbers. For $x >2$, we have
\begin{equation*}
    \sum_{\substack{p,q \\ pq \leq x}} \frac{1}{pq} = (\log \log x)^2 + 2 B_1 \log \log x + B_1^2 - \zeta(2) + O \left( \frac{\log \log x}{\log x} \right).
\end{equation*}
\end{lma}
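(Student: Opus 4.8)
The plan is to reduce the double prime sum to an iterated sum and apply Mertens' theorem. Since $q \geq 2$ forces $p \leq x/2$, I would write
$$\sum_{\substack{p,q \\ pq \leq x}} \frac{1}{pq} = \sum_{p \leq x/2} \frac{1}{p} \sum_{q \leq x/p} \frac{1}{q},$$
and insert the classical Mertens estimate $\sum_{q \leq y} 1/q = \log\log y + B_1 + O(1/\log y)$ (with $B_1$ as in \eqref{B1}) into the inner sum with $y = x/p$. This splits the quantity into three pieces: a main sum $T := \sum_{p \leq x/2} \frac{1}{p}\log\log(x/p)$, a constant-multiple sum $B_1 \sum_{p \leq x/2} 1/p$, and an error sum $\sum_{p \leq x/2}\frac{1}{p}O\!\left(\frac{1}{\log(x/p)}\right)$. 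The error sum is exactly of the shape controlled by \lmaref{sumplogp}, so it contributes $O(\log\log x/\log x)$, while a second application of Mertens gives $B_1 \sum_{p \leq x/2} 1/p = B_1 \log\log x + B_1^2 + O(1/\log x)$.

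The heart of the argument is the evaluation of $T$, which I would handle by partial summation against $A(t) := \sum_{p \leq t} 1/p = \log\log t + B_1 + E(t)$ with $E(t) \ll 1/\log t$. Writing $T = \int_{2^-}^{x/2} \log\log(x/t)\, dA(t)$ and integrating by parts, the boundary term is $A(x/2)\log\log 2$ and the integral term is $\int_2^{x/2} \frac{A(t)}{t\log(x/t)}\, dt$, since $\frac{d}{dt}\log\log(x/t) = -\frac{1}{t\log(x/t)}$. Substituting the asymptotic for $A(t)$ splits this integral into a $\log\log t$ piece, a $B_1$ piece, and an $E(t)$ piece. After the change of variables $u = \log t$, the $B_1$ piece and the $E(t)$ piece are elementary and contribute, respectively, $B_1\log\log x - B_1\log\log 2 + O(1/\log x)$ and $O(\log\log x/\log x)$.

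The decisive computation is the $\log\log t$ piece $\int_2^{x/2}\frac{\log\log t}{t\log(x/t)}\,dt$. Setting $L = \log x$ and $u = \log t$ turns it into $\int_{\log 2}^{L-\log 2}\frac{\log u}{L-u}\,du$; the reflection $v = L - u$ followed by writing $\log(L-v) = \log L + \log(1 - v/L)$ separates out a leading term $\log L \int_{\log 2}^{L-\log 2} dv/v = (\log\log x)^2 - \log\log x\cdot\log\log 2 + O(\log\log x/\log x)$ together with the scale-invariant integral $\int_{\log 2/L}^{1-\log 2/L}\frac{\log(1-w)}{w}\,dw$ after $w = v/L$. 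As $x\to\infty$ this last integral tends to $\int_0^1 \frac{\log(1-w)}{w}\,dw = -\sum_{n\geq 1}1/n^2 = -\zeta(2)$, with truncation of the endpoints costing only $O(\log\log x/\log x)$; this is precisely where the $-\zeta(2)$ term is produced. The main obstacle is therefore this dilogarithm-type integral together with the careful bookkeeping needed to verify that all the spurious $\log\log 2$ contributions, arising from the boundary term, the $B_1$ piece, and the leading term, cancel against one another. Assembling $T = (\log\log x)^2 + B_1\log\log x - \zeta(2) + O(\log\log x/\log x)$ with the $B_1\sum 1/p$ piece then yields $(\log\log x)^2 + 2B_1\log\log x + B_1^2 - \zeta(2) + O(\log\log x/\log x)$, as claimed.
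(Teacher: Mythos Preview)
Your argument is correct: the iterated application of Mertens' estimate together with partial summation on $T=\sum_{p\le x/2}\frac{1}{p}\log\log(x/p)$ does produce the dilogarithm integral $\int_0^1\frac{\log(1-w)}{w}\,dw=-\zeta(2)$, and the $\log\log 2$ cross-terms from the boundary contribution, the $B_1$ piece, and the leading piece do cancel as you describe. The error bookkeeping is also sound, with \lmaref{sumplogp} absorbing the residual $\sum_{p\le x/2}\frac{1}{p\log(x/p)}$.

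There is nothing to compare against here: the paper does not prove this lemma but simply quotes it from \cite[Lemma 2.4]{dkl2}. Your proof therefore supplies an argument that the present paper omits entirely. The route you take---Mertens plus Abel summation leading to the dilogarithm---is the standard one and is presumably what the cited reference does as well; an alternative would be to square the Mertens asymptotic directly and control the off-diagonal defect $\bigl(\sum_{p\le\sqrt{x}}1/p\bigr)^2-\sum_{pq\le x}1/(pq)$ via a hyperbola-type splitting, but your approach is cleaner for isolating the constant $-\zeta(2)$.
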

Next, we recall the following results regarding the density of certain sequences of $h$-free and $h$-full numbers:
\begin{lma}\label{restrict}\cite[Lemma 3]{jala2}
Let $x > 2$ be a real number. Let $h \geq 2$ be an integer. Let $\mathcal{S}_h$ be the set of $h$-free numbers. Let $q_1,\cdots,q_r$ be prime numbers. Then, we have
$$\sum_{\substack{n \leq x, n \in \mathcal{S}_h \\ (n,q_1) = \cdots = (n,q_r) =1}} 1 =  \prod_{i=1}^r \left( \frac{q_i^{h} - q_i^{h-1}}{q_i^h - 1} \right) \frac{x}{\zeta(h)} + O_h \left( 2^r x^{1/h} \right).$$
\end{lma}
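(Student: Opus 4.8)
The plan is to detect the $h$-freeness condition through the classical identity $\mathbf{1}_{\mathcal{S}_h}(n) = \sum_{d^h \mid n}\mu(d)$, which holds because $d^h\mid n$ is equivalent to $d\mid\prod_i p_i^{\lfloor s_i/h\rfloor}$ for $n$ as in \eqref{factorization}, so the sum telescopes to $1$ exactly when every $s_i<h$. Write $Q=q_1\cdots q_r$. Then I would start from
$$\sum_{\substack{n\leq x,\ n\in\mathcal{S}_h\\ (n,Q)=1}}1=\sum_{\substack{n\leq x\\ (n,Q)=1}}\sum_{d^h\mid n}\mu(d)=\sum_{d\leq x^{1/h}}\mu(d)\sum_{\substack{n\leq x,\ d^h\mid n\\ (n,Q)=1}}1,$$
and observe that any $d$ with $(d,Q)>1$ contributes nothing, since $d^h\mid n$ would force $(n,Q)>1$; hence the outer sum may be restricted to $(d,Q)=1$.

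Next, for such $d$, substitute $n=d^h m$: the constraints become $m\leq x/d^h$ and $(m,Q)=1$, so the inner sum is $\sum_{m\leq x/d^h,\ (m,Q)=1}1$. By inclusion--exclusion over the $2^r$ squarefree divisors of $Q$, this equals $\frac{x}{d^h}\prod_{i=1}^{r}\bigl(1-\tfrac{1}{q_i}\bigr)+O(2^r)$. Substituting back gives
$$\sum_{\substack{n\leq x,\ n\in\mathcal{S}_h\\ (n,Q)=1}}1=x\prod_{i=1}^{r}\Bigl(1-\frac{1}{q_i}\Bigr)\sum_{\substack{d\leq x^{1/h}\\ (d,Q)=1}}\frac{\mu(d)}{d^h}+O\!\left(2^r x^{1/h}\right).$$
Then I would complete the $d$-sum to all $d$ coprime to $Q$: the tail $\sum_{d>x^{1/h}}d^{-h}$ is $O_h\bigl(x^{-(h-1)/h}\bigr)$, which after multiplication by $x$ (and by the factor $\prod_i(1-1/q_i)\leq 1$) is absorbed into $O(x^{1/h})$. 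The completed sum is the Euler product
$$\sum_{\substack{d\geq 1\\ (d,Q)=1}}\frac{\mu(d)}{d^h}=\prod_{p\nmid Q}\Bigl(1-\frac{1}{p^h}\Bigr)=\frac{1}{\zeta(h)}\prod_{i=1}^{r}\Bigl(1-\frac{1}{q_i^h}\Bigr)^{-1}.$$
Multiplying by $\prod_i(1-1/q_i)$ and simplifying $\frac{1-1/q_i}{1-q_i^{-h}}=\frac{(q_i-1)q_i^{h-1}}{q_i^h-1}=\frac{q_i^h-q_i^{h-1}}{q_i^h-1}$ yields the asserted main term $\frac{x}{\zeta(h)}\prod_{i=1}^{r}\frac{q_i^h-q_i^{h-1}}{q_i^h-1}$.

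There is no genuine analytic obstacle here; the content is entirely bookkeeping. The one point that needs care is the uniformity of the error: the $O(2^r)$ produced by inclusion--exclusion is summed over the $\lfloor x^{1/h}\rfloor$ values of $d$, giving exactly the stated $O_h(2^r x^{1/h})$, and one must verify that completing the $\mu(d)/d^h$ series costs only $O(x^{1/h})$ independently of $r$ and of the particular primes $q_i$ — which holds because $\prod_i(1-1/q_i)\leq 1$. Interchanging the finite sums and the divisor-based inclusion--exclusion are both routine, so these are the only places where I would be slightly careful.
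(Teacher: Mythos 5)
Your proof is correct, and since the paper imports this lemma from Jakimczuk--Lal\'in without reproving it, there is no internal proof to compare against; your argument via the squarefree-kernel identity $\mathbf{1}_{\mathcal{S}_h}(n)=\sum_{d^h\mid n}\mu(d)$, inclusion--exclusion over the divisors of $Q=q_1\cdots q_r$, and completion of the $\sum\mu(d)/d^h$ series is exactly the standard route to this estimate, with the error terms $O(2^r x^{1/h})$ and $O_h(x^{1/h})$ tracked correctly. No gaps.
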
 
Let $C_{r,h}$ be a constant dependent on $r$ and $h$, defined as
\begin{equation*}
    C_{r,h} := \prod_{j=h, j \neq r}^{2h-1} \zeta(j/r),
\end{equation*}
and let $\phi_h(s)$ be a complex valued function defined on $\Re(s) > 1/(2h+3)$, satisfying the equation
\begin{equation*}
\prod_{p} \left( 1 - p^{-(2h+2)s} + \sum_{r = 2h+3}^{(3h^2+h-2)/2} a_{r,h} p^{-rs} \right) = \zeta^{-1}((2h+2)s) \phi_h(s),
\end{equation*}
where $a_{r,h}$ satisfying the identity
\begin{equation*}
    \left( 1 + \frac{v^h}{1-v} \right)(1-v^h) \cdots (1-v^{2h-1}) = 1 - v^{2h+2} + \sum_{2h+3}^{(3h^2+h-2)/2} a_{r,h} v^r.
\end{equation*}
\begin{lma}\label{Aqhlemma}\cite[Lemma 4.1]{dkl2}
Let $q_1,q_2,\cdots,q_r$ be distinct primes. Let
\begin{equation}\label{Aqh}
    A_{q_1,\cdots,q_r,h}(x) := \sum_{\substack{n \leq x, n \in \mathcal{N}_h \\ (n,q_1) = \cdots = (n,q_r) =1}} 1.
\end{equation}
For any $x >2$, we have
\begin{equation*}
    A_{q_1,\cdots,q_r,h}(x) = \gamma_{q_1,\cdots,q_r,0,h} x^{\frac{1}{h}} + \gamma_{q_1,\cdots,q_r,1,h} x^{\frac{1}{h+1}} + \cdots + \gamma_{q_1,\cdots,q_r,h-1,h} x^{\frac{1}{2h-1}} + O_h \left( x^{\eta_h} \right),
\end{equation*}
where $\frac{1}{2h+2} < \eta_h < \frac{1}{2h -1}$, and for $i \in \{0,1,\cdots,h-1\}$,
\begin{equation*}
\gamma_{q_1,\cdots,q_r,i,h} = C_{h+i,h} \frac{\phi_h(1/(h+i))}{\zeta((2h+2)/(h+i)) \left( \prod_{j=1}^r\left( 1 +  \frac{q_j^{-h/(h+i)}}{1 - q_j^{-1/(h+i)}} \right) \right)}.
\end{equation*}
%
\end{lma}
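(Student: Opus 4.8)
The plan is to obtain the asymptotic expansion from the Dirichlet series of the counting function together with Perron's formula and a contour shift past the relevant poles. Since a number $n\in\mathcal{N}_h$ has every prime exponent at least $h$, the local factor at a prime $p\mid n$ is $1+p^{-hs}+p^{-(h+1)s}+\cdots=1+\frac{p^{-hs}}{1-p^{-s}}$, and imposing coprimality to $q_1,\dots,q_r$ simply deletes the Euler factors at those primes. Hence for $\Re(s)>1/h$,
$$F(s):=\sum_{\substack{n\in\mathcal{N}_h\\ (n,q_1)=\cdots=(n,q_r)=1}}\frac{1}{n^s}=\prod_{p\nmid q_1\cdots q_r}\left(1+\frac{p^{-hs}}{1-p^{-s}}\right).$$
First I would use the polynomial identity defining the coefficients $a_{r,h}$, with $v=p^{-s}$, to factor each Euler factor as $\bigl(1-p^{-(2h+2)s}+\sum_r a_{r,h}p^{-rs}\bigr)\big/\prod_{j=h}^{2h-1}(1-p^{-js})$. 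Taking the product over all primes, recognizing $\prod_{j=h}^{2h-1}\prod_p(1-p^{-js})^{-1}=\prod_{j=h}^{2h-1}\zeta(js)$ and $\prod_p\bigl(1-p^{-(2h+2)s}+\sum_r a_{r,h}p^{-rs}\bigr)=\zeta^{-1}((2h+2)s)\phi_h(s)$, and finally dividing out the finitely many factors at $q_1,\dots,q_r$, yields the closed form
$$F(s)=\frac{\prod_{j=h}^{2h-1}\zeta(js)}{\zeta((2h+2)s)}\,\phi_h(s)\prod_{\ell=1}^{r}\left(1+\frac{q_\ell^{-hs}}{1-q_\ell^{-s}}\right)^{-1},$$
valid and meromorphic for $\Re(s)>1/(2h+3)$.

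Next I would apply Perron's formula, writing $A_{q_1,\cdots,q_r,h}(x)=\frac{1}{2\pi i}\int_{(c)}F(s)\frac{x^s}{s}\,ds$ for a fixed $c>1/h$, and then shift the contour leftward to the vertical line $\Re(s)=\eta_h$ with $\frac{1}{2h+2}<\eta_h<\frac{1}{2h-1}$. The integrand is holomorphic on $\Re(s)>1/(2h+3)$ except for the simple poles of $\zeta(js)$ at $s=1/j$ for $j=h,h+1,\dots,2h-1$; writing $j=h+i$ these are the poles at $s=1/(h+i)$ for $i\in\{0,1,\dots,h-1\}$, all of which lie in the strip $\eta_h<\Re(s)<c$ and are crossed by the shift. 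The $q$-factors are analytic and nonvanishing at each such real point (for $s>0$ one has $1-q^{-s}+q^{-hs}>0$), so they contribute only their values to the residues, and the zero of $\zeta((2h+2)s)$ at $s=1/(2h+2)$ lies strictly to the left of the contour. The rightmost pole $s=1/h$ produces the leading term $x^{1/h}$.

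The third step is the residue computation, which I expect to reproduce the stated coefficients exactly. At $s=1/(h+i)$ the only singular factor is $\zeta((h+i)s)$, which has residue $\frac{1}{h+i}$ there, while $\frac{x^s}{s}$ contributes $(h+i)\,x^{1/(h+i)}$; the factor $h+i$ cancels, and the remaining zeta factors $\zeta(js)$ with $j\neq h+i$ evaluate to $\prod_{j=h,\,j\neq h+i}^{2h-1}\zeta(j/(h+i))=C_{h+i,h}$. Collecting the value of $\phi_h$, the denominator $\zeta((2h+2)/(h+i))$, and the $q$-factors gives precisely
$$\gamma_{q_1,\cdots,q_r,i,h}=C_{h+i,h}\,\frac{\phi_h(1/(h+i))}{\zeta((2h+2)/(h+i))\left(\prod_{j=1}^{r}\bigl(1+q_j^{-h/(h+i)}/(1-q_j^{-1/(h+i)})\bigr)\right)},$$
matching the statement and yielding the $h$ main terms $\sum_{i=0}^{h-1}\gamma_{q_1,\cdots,q_r,i,h}\,x^{1/(h+i)}$.

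The main obstacle will be bounding the shifted integral on $\Re(s)=\eta_h$ by $O_h(x^{\eta_h})$ and rigorously justifying the shift, which is where the constraint on $\eta_h$ enters. Since $\eta_h>1/(2h+2)$ forces $\Re((2h+2)s)>1$, the denominator $\zeta((2h+2)s)$ stays in its region of absolute convergence, bounded and bounded away from zero, and $\phi_h(s)$ is likewise convergent and bounded on the line; the numerator factors $\zeta(js)$ can grow in $|t|$, so I would control them by convexity bounds for $\zeta$ in the critical strip. Combining these with the $|s|^{-1}$ decay, using the truncated Perron formula with a parameter $T$ to estimate the horizontal segments and the truncation tail, and then optimizing $T$, should give the total contribution of the shifted contour as $O_h(x^{\eta_h})$ for a suitable $\eta_h$ in the allowed range. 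The delicacy is purely in these vertical-line estimates and the bookkeeping of the truncation errors; the algebraic structure (factorization and residues) is forced by the Euler product.
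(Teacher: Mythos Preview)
The paper does not prove this lemma; it is quoted from \cite[Lemma~4.1]{dkl2} without argument and used as a black box in the $h$-full computations. So there is no in-paper proof to compare against.

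Your outline is the standard Perron/contour-shift route and is correct. The Euler-product form of $F(s)$, the factorization via the polynomial identity for the $a_{r,h}$ into $\zeta^{-1}((2h+2)s)\,\phi_h(s)\prod_{j=h}^{2h-1}\zeta(js)$ divided by the finitely many local $q$-factors, the location of the simple poles at $s=1/(h+i)$ for $0\le i\le h-1$, and your residue computation all reproduce the stated constants $\gamma_{q_1,\dots,q_r,i,h}$ exactly. As you say, the genuine work is the vertical-line bound on $\Re(s)=\eta_h$ via convexity estimates for $\zeta$ and a truncated Perron formula. One additional bookkeeping point worth tracking when you write it out: the lemma claims $O_h(\cdot)$ rather than $O_{h,r}(\cdot)$, so you should check that the product of the $q$-factors on the shifted contour is controlled uniformly in the $q_j$; in the paper's applications one has $r\le 2$, but the cited lemma is stated for arbitrary $r$.
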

\begin{lma}\label{hfullkfree}\cite[Lemma 17]{jala2}
Let $q$ be a prime and $k > h$ be integers. Then, for any small $\epsilon > 0$, we have
$$\sum_{\substack{n \leq x \\ n \in \mathcal{N}_h \cap \mathcal{S}_k, (n , q) = 1}} 1 = \frac{1 - q^{-1/h}}{1 - q^{-1/h} + q^{-1} - q^{-k/h}} \eta_{h,k} x^{1/h} + O \left( x^{\frac{2h+1}{2h(h+1)} + \epsilon} \right),$$
where $\eta_{h,k}$ is the convergent product given by
\begin{equation}\label{etahk}
\eta_{h,k} = \prod_p \left( 1 - \frac{1}{p} \right) \left( \frac{1 - p^{-1/h} + p^{-1} - p^{-k/h}}{1 - p^{-1/h}} \right).    
\end{equation}
\end{lma}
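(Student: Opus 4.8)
The plan is to realize the counting function as the coefficient sum of a Dirichlet series, isolate its single dominant singularity by factoring out a copy of $\zeta(hs)$, and extract the main term by an elementary convolution (Dirichlet hyperbola) argument. Set $a_n = 1$ when $n \in \mathcal{N}_h \cap \mathcal{S}_k$ and $(n,q) = 1$, and $a_n = 0$ otherwise. The joint condition $n \in \mathcal{N}_h \cap \mathcal{S}_k$ says exactly that every exponent $s_i$ in the factorization \eqref{factorization} of $n$ satisfies $h \le s_i \le k-1$, so the generating series has the Euler product
$$F_q(s) := \sum_{n \ge 1} \frac{a_n}{n^s} = \prod_{p \ne q} \left( 1 + \sum_{j=h}^{k-1} \frac{1}{p^{js}} \right),$$
absolutely convergent for $\Re(s) > 1/h$.

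First I would extract the pole. Writing $u = p^{-s}$ and expanding, $(1 - u^h)\big(1 + \sum_{j=h}^{k-1} u^j\big) = 1 + \big(u^{h+1} + \cdots + u^{k-1}\big) - \big(u^{2h} + \cdots + u^{h+k-1}\big) = 1 + O(u^{h+1})$, which lets me write
$$F_q(s) = \zeta(hs)\,(1 - q^{-hs})\,G_q(s), \qquad G_q(s) := \prod_{p \ne q} (1 - p^{-hs})\left( 1 + \sum_{j=h}^{k-1} p^{-js} \right).$$
Because the local factors of $G_q$ are $1 + O(p^{-(h+1)\Re(s)})$, the product defining $G_q$ converges absolutely, and the associated series $H_q(s) := (1 - q^{-hs}) G_q(s) = \sum_m c_m m^{-s}$ converges absolutely, in the half-plane $\Re(s) > 1/(h+1)$; in particular $\sum_{m \le y} |c_m| \ll_{h,k} y^{1/(h+1) + \epsilon}$. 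Thus the only singularity of $F_q$ to the right of $\Re(s) = 1/(h+1)$ is the simple pole of $\zeta(hs)$ at $s = 1/h$.

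Next I would pass to coefficients. Since $\zeta(hs) = \sum_{a \ge 1} a^{-hs}$ is the Dirichlet series of the indicator of perfect $h$-th powers, the factorization $F_q = \zeta(hs) H_q$ gives $a_n = \sum_{a^h m = n} c_m$, whence
$$\sum_{n \le x} a_n = \sum_{m \le x} c_m \left\lfloor \left( \frac{x}{m} \right)^{1/h} \right\rfloor = x^{1/h} \sum_{m \le x} c_m m^{-1/h} + O\Big( \sum_{m \le x} |c_m| \Big).$$
Replacing $\sum_{m \le x}$ by $\sum_m$ (at the cost of the tail $\sum_{m > x}|c_m| m^{-1/h}$) and invoking the bound on $\sum_{m \le y}|c_m|$, both error contributions are $O_{h,k}(x^{1/(h+1)+\epsilon})$. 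Since $\tfrac{1}{h+1} < \tfrac{2h+1}{2h(h+1)}$, this sits comfortably inside the claimed error term, and the main term is $x^{1/h} H_q(1/h)$.

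It remains to identify the constant $H_q(1/h) = (1 - q^{-1}) G_q(1/h)$ with the stated coefficient, and this algebraic bookkeeping is where I expect the real work to lie. Evaluating the local factor of $F_q$ at $s = 1/h$ and summing the geometric progression gives $1 + \sum_{j=h}^{k-1} p^{-j/h} = \frac{1 - p^{-1/h} + p^{-1} - p^{-k/h}}{1 - p^{-1/h}}$, so that $G_q(1/h) = \prod_{p \ne q}(1 - p^{-1}) \cdot \frac{1 - p^{-1/h} + p^{-1} - p^{-k/h}}{1 - p^{-1/h}}$ is exactly the product \eqref{etahk} defining $\eta_{h,k}$ with its factor at $p = q$ deleted. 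Since that deleted factor equals $(1 - q^{-1})\frac{1 - q^{-1/h} + q^{-1} - q^{-k/h}}{1 - q^{-1/h}}$, multiplying by the prefactor $(1 - q^{-1})$ cancels the $(1 - q^{-1})$ and leaves $H_q(1/h) = \frac{1 - q^{-1/h}}{1 - q^{-1/h} + q^{-1} - q^{-k/h}}\, \eta_{h,k}$, the asserted main coefficient. The chief obstacles are thus this clean separation of the single $q$-dependent Euler factor from $\eta_{h,k}$ and the verification that the bound $\sum_{m \le y}|c_m| \ll y^{1/(h+1)+\epsilon}$ (hence the error term) holds with the required uniformity; a Perron-type contour shift from $\Re(s) = 1/h + o(1)$ to an abscissa in $(1/(h+1), 1/h)$ furnishes an alternative route to the same main and error terms.
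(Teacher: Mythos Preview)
The paper does not prove this lemma at all; it is quoted from \cite[Lemma~17]{jala2} and used as a black box in the proof of \thmref{nonormalorderhfull}. So there is no in-paper argument to compare against.

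Your proof is correct and in fact yields a slightly sharper error term than the one stated. The factorization $F_q(s)=\zeta(hs)H_q(s)$ with $H_q$ absolutely convergent for $\Re(s)>1/(h+1)$ is exactly right: the local factor $(1-u^h)\big(1+\sum_{j=h}^{k-1}u^j\big)$ equals $1+u^{h+1}+\cdots$ (or $1-u^{2h}$ when $k=h+1$), so the product over $p$ of the absolute-value series converges for $\Re(s)>1/(h+1)$, and the bound $\sum_{m\le y}|c_m|\le y^{1/(h+1)+\epsilon}\sum_m|c_m|m^{-1/(h+1)-\epsilon}\ll y^{1/(h+1)+\epsilon}$ follows immediately. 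The convolution step and the tail estimate then give the error $O(x^{1/(h+1)+\epsilon})$, which is smaller than the stated $O\big(x^{\frac{2h+1}{2h(h+1)}+\epsilon}\big)$ since $\frac{2h+1}{2h(h+1)}=\frac{1}{h+1}+\frac{1}{2h(h+1)}$. Your identification of the constant is also correct: the geometric sum gives $1+\sum_{j=h}^{k-1}p^{-j/h}=\frac{1-p^{-1/h}+p^{-1}-p^{-k/h}}{1-p^{-1/h}}$, so $G_q(1/h)$ is $\eta_{h,k}$ with the $p=q$ factor removed, and multiplying by $(1-q^{-1})$ leaves precisely $\frac{1-q^{-1/h}}{1-q^{-1/h}+q^{-1}-q^{-k/h}}\,\eta_{h,k}$.
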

\section{Distribution of \texorpdfstring{$\omega_k(n)$}{} over \texorpdfstring{$h$}{}-free numbers}
In this section, we study the distribution of the average value of $\omega_k(n)$ over $h$-free numbers. 
\subsection{For \texorpdfstring{$\omega_1(n)$}{}}
\begin{proof}[\textbf{Proof of \thmref{hfreeomega1}}]
Writing $n = p y$ with $(y,p)=1$, we obtain
\begin{equation*}\label{anomega11}
    \sum_{\substack{n \leq x \\  n \in \mathcal{S}_h}} \omega_1(n) = \sum_{\substack{n \leq x \\  n \in \mathcal{S}_h}} \sum_{\substack{p \\ p \Vert n}} 1 = \sum_{p \leq x} \sum_{\substack{n \leq x \\ n \in \mathcal{S}_h, p \Vert n}} 1 = \sum_{p \leq x} \sum_{\substack{y \leq x/p \\ y \in \mathcal{S}_h, (y,p) =1 }} 1.
\end{equation*}
Now, first using \lmaref{restrict} for a single prime $p$ to the above and then using \lmaref{primepowerleqx}, we obtain
\begin{align*}
    \sum_{\substack{n \leq x \\  n \in \mathcal{S}_h}} \omega_1(n)
 & = \sum_{p \leq x} \left( \frac{p^{h} - p^{h-1}}{p(p^h - 1)} \right) \frac{x}{\zeta(h)} + O_h \left(  \frac{x}{\log x} \right).
\end{align*}
Using \lmaref{primepower}, $\frac{p^{h} - p^{h-1}}{p(p^h - 1)} = \frac{1}{p} - \frac{p^{h-1} - 1}{p(p^h - 1)}$, and Mertens' second theorem  given by
\begin{equation}\label{sum1/p}
    \sum_{p \leq x} \frac{1}{p} = \log \log x + B_1 + O \left( \frac{1}{\log x} \right),
\end{equation}
we obtain
\begin{align}\label{needforomega^2}
    \sum_{p \leq x}  \left( \frac{p^{h} - p^{h-1}}{p(p^h - 1)} \right) 
    & = \log \log x + B_1 - \sum_{p} \frac{p^{h-1} - 1}{p(p^h - 1)} + O_h \left( \frac{1}{\log x} \right).
\end{align}
Plugging the above back into the previous equation completes the first part of the proof.
Next, note that
\begin{equation}\label{1sec_om_k_1}
    \sum_{\substack{n \leq x \\ n \in \mathcal{S}_h}} \omega_1^2(n) = \sum_{\substack{n \leq x \\ n \in \mathcal{S}_h}} \left( \sum_{\substack{p \\ p \Vert n}} 1 \right)^2 = \sum_{\substack{n \leq x \\ n \in \mathcal{S}_h}} \omega_1(n) + \sum_{\substack{n \leq x \\ n \in \mathcal{S}_h}} \sum_{\substack{p,q \\ p \Vert n, q \Vert n,  p \neq q }} 1,
\end{equation}
where $p$ and $q$ above denote primes. The first sum on the right-hand side above is the first moment estimated above and for the second sum, using \lmaref{restrict} for two primes $p$ and $q$, we obtain
\begin{equation}\label{1sec_om_k_imp}
    \sum_{\substack{n \leq x \\ n \in \mathcal{S}_h}} \sum_{\substack{p,q \\ p \Vert n, q \Vert n, p \neq q}} 1 = \sum_{\substack{p,q \\ p \neq q, pq \leq x}}  \left( \left( \frac{p^{h} - p^{h-1}}{p(p^h - 1)} \right) \left( \frac{q^{h} - q^{h-1}}{q(q^h - 1)} \right) \frac{x}{\zeta(h)} + O_h \left(\frac{x^{1/h}}{(pq)^{1/h}} \right) \right).
\end{equation}
Next, we bound the above error term using \lmaref{primepowerleqx} with $\alpha = 1/h$ and \lmaref{sumplogp} as the following
\begin{align}\label{1sec_om_k_n1}
   x^{1/h} \sum_{\substack{p,q \\ p \neq q, pq \leq x}} \frac{1}{(pq)^{1/h}} & = x^{1/h} \sum_{\substack{p \leq x/2}}\frac{1}{p^{1/h}} \sum_{\substack{q \leq x/p}} \frac{1}{q^{1/h}} \notag \\
    & \ll_{h} x \sum_{\substack{p \leq x/2}} \left(  \frac{1}{p \log(x/p)}\right) \notag \\
    & \ll_{h} \frac{x \log \log x}{\log x}.
\end{align}
Now, we estimate the main term in \eqref{1sec_om_k_imp}. First, we can divide the sum as
\begin{align}\label{1pkseparate}
    & \sum_{\substack{p,q \\ p \neq q, pq \leq x}} \left( \frac{p^{h} - p^{h-1}}{p(p^h - 1)} \right) \left( \frac{q^{h} - q^{h-1}}{q(q^h - 1)} \right) \notag \\
    & = \sum_{\substack{p,q \\ pq \leq x}} \left( \frac{p^{h} - p^{h-1}}{p(p^h - 1)} \right) \left( \frac{q^{h} - q^{h-1}}{q(q^h - 1)} \right) - \sum_{\substack{p \\ p \leq x^{1/2}}} \left( \frac{p^{h} - p^{h-1}}{p(p^h - 1)} \right)^2.
\end{align}
The second sum on the right-hand side above is estimated using \lmaref{primepower} as
\begin{align}\label{1p1separate}
    \sum_{\substack{p \\ p \leq x^{1/2}}} \left( \frac{p^{h} - p^{h-1}}{p(p^h - 1)} \right)^2 
    & = \sum_{\substack{p}} \left( \frac{p^{h} - p^{h-1}}{p(p^h - 1)} \right)^2 + O \left( \frac{1}{x^{1/2} \log x}\right).
\end{align}
Using $\frac{p^h - p^{h-1}}{p(p^h -1)} = \frac{1}{p} - \frac{p^{h-1}-1}{p(p^h - 1)}$, and the symmetry in $p$ and $q$, we have
\begin{align}\label{partitions}
    & \sum_{\substack{p,q \\ pq \leq x}} \left( \frac{p^{h} - p^{h-1}}{p(p^h - 1)} \right) \left( \frac{q^{h} - q^{h-1}}{q(q^h - 1)} \right) \notag \\
    & = \sum_{\substack{p,q \\ pq \leq x}} \frac{1}{pq} - 2 \sum_{\substack{p,q \\ pq \leq x}} \frac{1}{p} \left( \frac{q^{h-1} - 1}{q(q^h - 1)}\right) + \sum_{\substack{p,q \\ pq \leq x}} \left( \frac{p^{h-1} - 1}{p(p^h - 1)}\right) \left( \frac{q^{h-1} - 1}{q(q^h - 1)}\right).
\end{align}
We estimate the sums on the right-hand side above separately. For the first sum, we use \lmaref{saidak}. For the second sum, we use \lmaref{primepower}, and then \eqref{sum1/p} and the classical prime number theorem given as
\begin{equation}\label{PNT}
    \sum_{p \leq x} 1 = \frac{x}{\log x} + \left( \frac{x}{(\log x)^2}\right)
\end{equation} 
to obtain
\begin{align}\label{secondsum}
    & \sum_{\substack{p,q \\ pq \leq x}} \frac{1}{p} \left( \frac{q^{h-1} - 1}{q(q^h - 1)}\right) \notag\\
    & = \sum_{\substack{p \\ p \leq x/2}} \frac{1}{p} \left( \sum_{p} \left( \frac{p^{h-1} - 1}{p(p^h - 1)}\right) + O \left( \frac{1}{(x/p) \log (x/p)}\right)\right) \notag \\
    & =  \sum_{p} \left( \frac{p^{h-1} - 1}{p(p^h - 1)}\right) \left( \log \log x + B_1 \right) + O \left( \frac{1}{\log x} \right).
\end{align}
For the third sum, we use \lmaref{primepower} twice and then \lmaref{sumplogp} to obtain
\begin{align}\label{thirdsum}
    \sum_{\substack{p,q \\ pq \leq x}} \left( \frac{p^{h-1} - 1}{p(p^h - 1)}\right) \left( \frac{q^{h-1} - 1}{q(q^h - 1)}\right) 
    & = \left( \sum_{p} \left( \frac{p^{h-1} - 1}{p(p^h - 1)}\right) \right)^2 + O \left( \frac{\log \log x}{x \log x}\right) .
\end{align}
Combining \eqref{partitions}, \eqref{secondsum}, \eqref{thirdsum}, and \lmaref{saidak}, we obtain
\begin{align*}
    & \sum_{\substack{p,q \\ pq \leq x}} \left( \frac{p^{h} - p^{h-1}}{p(p^h - 1)} \right) \left( \frac{q^{h} - q^{h-1}}{q(q^h - 1)} \right) \\
    & = (\log \log x)^2 + 2 B_1 \log \log x + B_1^2 - \zeta(2) - 2 \sum_{p} \left( \frac{p^{h-1} - 1}{p(p^h - 1)}\right) \left( \log \log x + B_1 \right) \\
    & \hspace{.5cm} + \left( \sum_{p}  \frac{p^{h-1} - 1}{p(p^h - 1)} \right)^2 + O \left( \frac{\log \log x}{\log x} \right).
\end{align*}
Combining \eqref{1sec_om_k_1}, \eqref{1sec_om_k_imp}, \eqref{1sec_om_k_n1}, \eqref{1pkseparate} and \eqref{1p1separate}, and then combining it further with the above equation and the first moment of $\omega_1(n)$ over $h$-free numbers, we obtain the second moment estimate.
\end{proof}
\subsection{For \texorpdfstring{$\omega_k(n)$}{} with \texorpdfstring{$k \geq 2$}{}}
\begin{proof}[\textbf{Proof of \thmref{hfreeomegak}}]
Using $n = p^k y$ with $(p,y) = 1$, we obtain
\begin{equation}\label{omegakmain}
\sum_{\substack{n \leq x \\ n \in \mathcal{S}_h}} \omega_k(n) = \sum_{\substack{n \leq x \\ n \in \mathcal{S}_h}} \sum_{\substack{p|n \\ p^k \Vert n}} 1 = \sum_{p \leq x^{1/k}} \sum_{\substack{n \leq x \\ n \in \mathcal{S}_h, p^k \Vert n}} 1 = \sum_{p \leq x^{1/k}} \sum_{\substack{y \leq x/p^k \\ y \in \mathcal{S}_h, (p,y) = 1}} 1.
\end{equation}
Using \lmaref{restrict} for a single prime $p$, we obtain
\begin{align}\label{omegakbound1}
\sum_{p \leq x^{1/k}} \sum_{\substack{y \leq x/p^k \\ y \in \mathcal{S}_h, (p,y) = 1}} 1 & = \sum_{p \leq x^{1/k}} \left( \frac{1}{\zeta(h)} \left( \frac{p^{h} - p^{h-1}}{p^h - 1} \right) \frac{x}{p^k} + O_h \left( \frac{x^{1/h}}{p^{k/h}}\right) \right) \notag \\
& = \sum_{p} \left( \frac{p^{h} - p^{h-1}}{p^k(p^h - 1)} \right) \frac{x}{\zeta(h)} + \sum_{p > x^{1/k}} \left( \frac{p^{h} - p^{h-1}}{p^k(p^h - 1)} \right) \frac{x}{\zeta(h)} \notag \\
& \hspace{.5cm} + O_h \left( x^{1/h} \sum_{p \leq x^{1/k}} \frac{1}{p^{k/h}}\right).
\end{align}
Using \lmaref{primepower}, we estimate the second sum in the above equation as
\begin{align}\label{omegakbound2}
\sum_{p > x^{1/k}} \left( \frac{p^{h} - p^{h-1}}{p^k(p^h - 1)} \right) \frac{x}{\zeta(h)} \ll_{h,k} \frac{x^{1/k}}{\log x}.
\end{align}
%
Using \lmaref{primepowerleqx} with $\alpha = k/h$, we obtain
\begin{equation}\label{sumk/h}
    \sum_{p \leq x^{1/k}} \frac{1}{p^{k/h}} \ll_{h,k} \frac{x^{\frac{1}{k} -\frac{1}{h}}}{\log x}.
\end{equation}
%
Combining \eqref{omegakmain}, \eqref{omegakbound1}, \eqref{omegakbound2}, and \eqref{sumk/h} completes the first part of the proof.

For the second moment, note that
\begin{equation}\label{sec_om_k_1}
    \sum_{\substack{n \leq x \\ n \in \mathcal{S}_h}} \omega_k^2(n) = \sum_{\substack{n \leq x \\ n \in \mathcal{S}_h}} \left( \sum_{\substack{p \\ p^k \Vert n}} 1 \right)^2 = \sum_{\substack{n \leq x \\ n \in \mathcal{S}_h}} \omega_k(n) + \sum_{\substack{n \leq x \\ n \in \mathcal{S}_h}} \sum_{\substack{p,q \\ p^k \Vert n, q^k \Vert n,  p \neq q }} 1,
\end{equation}
where $p$ and $q$ above denote primes. The first sum on the right-hand side above is te first moment studied above, and for the second sum, using \lmaref{restrict}, we obtain
\begin{equation}\label{sec_om_k_imp}
    \sum_{\substack{n \leq x \\ n \in \mathcal{S}_h}} \sum_{\substack{p,q \\ p^k \Vert n, q^k \Vert n, p \neq q}} 1 = \sum_{\substack{p,q \\ p \neq q, pq \leq x^{1/k}}}  \left( \left( \frac{p^{h} - p^{h-1}}{p^k(p^h - 1)} \right) \left( \frac{q^{h} - q^{h-1}}{q^k(q^h - 1)} \right) \frac{x}{\zeta(h)} + O_h \left(\frac{x^{1/h}}{(pq)^{k/h}} \right) \right).
\end{equation}
Next, we bound the above error term. We employ \lmaref{primepowerleqx} with $\alpha = k/h$, and with $\alpha = 2k/h$ when $2k <h$, \eqref{sum1/p} when $2k =h$, and $\sum_p 1/p^{2k/h} = O(1)$ when $2k >h$ below to obtain 
\begin{align}\label{sec_om_k_2}
    x^{1/h} \sum_{\substack{p,q \\ p \neq q, pq \leq x^{1/k}}}  \frac{1}{(pq)^{k/h}} & = x^{1/h} \sum_{\substack{p,q \\ pq \leq x^{1/k}}}  \frac{1}{(pq)^{k/h}} - x^{1/h} \sum_{\substack{p \leq x^{1/2k}}}  \frac{1}{p^{2k/h}} \notag \\
    & \ll_{k,h} \frac{x^{\frac{1}{k}} \log \log x}{\log x}.
\end{align}
Now, we estimate the main term in \eqref{sec_om_k_imp}. First, we can divide the sum as
\begin{align}\label{pkseparate}
    & \sum_{\substack{p,q \\ p \neq q, pq \leq x^{1/k}}} \left( \frac{p^{h} - p^{h-1}}{p^k(p^h - 1)} \right) \left( \frac{q^{h} - q^{h-1}}{q^k(q^h - 1)} \right) \notag \\
    & = \sum_{\substack{p,q \\ pq \leq x^{1/k}}} \left( \frac{p^{h} - p^{h-1}}{p^k(p^h - 1)} \right) \left( \frac{q^{h} - q^{h-1}}{q^k(q^h - 1)} \right) - \sum_{\substack{p \\ p \leq x^{1/2k}}} \left( \frac{p^{h} - p^{h-1}}{p^k(p^h - 1)} \right)^2.
\end{align}
The second sum on the right-hand side above is estimated using \lmaref{primepower} as
\begin{align}\label{p1separate}
    \sum_{\substack{p \\ p \leq x^{1/2k}}} \left( \frac{p^{h} - p^{h-1}}{p^k(p^h - 1)} \right)^2 
    & = \sum_{\substack{p}} \left( \frac{p^{h} - p^{h-1}}{p^k(p^h - 1)} \right)^2 + O_{k} \left( \frac{1}{x^{1-\frac{1}{2k}} \log x}\right).
\end{align}
For the first sum, we employ \lmaref{primepower}, then \lmaref{sumplogp}, and then again \lmaref{primepower} to obtain
\begin{align*}
    & \sum_{\substack{p,q \\ pq \leq x^{1/k}}} \left( \frac{p^{h} - p^{h-1}}{p^k(p^h - 1)} \right) \left( \frac{q^{h} - q^{h-1}}{q^k(q^h - 1)} \right) \\
    & = \sum_{\substack{p \\ p \leq x^{1/k}/2}} \left( \frac{p^{h} - p^{h-1}}{p^k(p^h - 1)} \right)  \left( \sum_{\substack{p}} \left( \frac{p^{h} - p^{h-1}}{p^k(p^h - 1)} \right) \right) + O_k \left( x^{\frac{1}{k} - 1} \sum_{\substack{p \\ p \leq x^{1/k}/2}} \frac{1}{p \log(x^{1/k}/p)}\right) \\
    & = \left( \sum_{\substack{p}} \left( \frac{p^{h} - p^{h-1}}{p^k(p^h - 1)} \right) \right)^2 + O_k \left( \frac{x^{\frac{1}{k}-1}}{\log x} \right) + O_k \left( \frac{x^{\frac{1}{k} - 1} \log \log x}{\log x} \right).
\end{align*}
Combining the last three results, we obtain
\begin{align*}
    & \sum_{\substack{p,q \\ p \neq q, pq \leq x^{1/k}}} \left( \frac{p^{h} - p^{h-1}}{p^k(p^h - 1)} \right) \left( \frac{q^{h} - q^{h-1}}{q^k(q^h - 1)} \right) \frac{x}{\zeta(h)} \\
    & = \left( \sum_{\substack{p}} \left( \frac{p^{h} - p^{h-1}}{p^k(p^h - 1)} \right) \right)^2 -  \sum_{\substack{p}} \left( \frac{p^{h} - p^{h-1}}{p^k(p^h - 1)} \right)^2 + O_{h,k} \left( \frac{x^{1/k} \log \log x}{\log x} \right).
\end{align*}
Combining the above with \eqref{sec_om_k_1}, \eqref{sec_om_k_imp}, and the first moment of $\omega_k(n)$ studied in the first part of the proof, we obtain the required result.
\end{proof}

\section{Distribution of \texorpdfstring{$\omega_k(n)$}{} over \texorpdfstring{$h$}{}-full numbers}

In this section, we study the distribution of the function $\omega_k(n)$ over $h$-full numbers. The definition of $h$-full numbers enforces that the distribution of $\omega_k(n)$ over $h$-full numbers is zero for $k \leq h-1$. Thus, we only need to study the case $k \geq h$. 
\subsection{The first moment of \texorpdfstring{$\omega_k(n)$}{} over \texorpdfstring{$h$}{}-full numbers}
\begin{proof}[\textbf{Proof of \thmref{hfullomegak}}]
Note that 
\begin{equation}\label{omegakeqhfull}
\sum_{\substack{n \leq x \\ n \in \mathcal{N}_h}} \omega_k(n) = \sum_{\substack{n \leq x \\ n \in \mathcal{N}_h}} \sum_{\substack{p|n \\ p^k \Vert n}} 1 = \sum_{p \leq x^{1/k}} \sum_{\substack{n \leq x \\ n \in \mathcal{N}_h, p^k \Vert n}} 1
= \sum_{p \leq x^{1/k}}  A_{p,h}(x/p^k),
\end{equation}
where $A_{p,h}(y)$ is defined in \eqref{Aqh}. Thus, applying \lmaref{Aqhlemma} with a single prime $p$, we have
\begin{align}\label{reghfull1}
    \sum_{\substack{n \leq x \\ n \in \mathcal{N}_h}} \omega_k(n) 
    & = \gamma_{0,h} x^{1/h} \sum_{p \leq x^{1/k}} \frac{1 - p^{-1/h}}{p^{k/h} \left( 1 - p^{-1/h} + p^{-1} \right)} + O_h \left( x^{1/(h+1)} \sum_{p \leq x^{1/k}} \frac{1}{p^{k/(h+1)}} \right).
\end{align}
The above formula for $k = h$ yields
\begin{align*}
    \sum_{\substack{n \leq x \\ n \in \mathcal{N}_h}} \omega_h(n) & = \gamma_{0,h} x^{1/h} \left( \sum_{p \leq x^{1/h}} \frac{1}{p \left( 1 - p^{-1/h} + p^{-1} \right)} - \sum_{p \leq x^{1/h}} \frac{1}{p^{1+1/h}\left( 1 - p^{-1/h} + p^{-1} \right)} \right)\\
    & \hspace{.5cm} + O_h \left( x^{1/(h+1)} \sum_{p \leq x^{1/h}} \frac{1}{p^{h/(h+1)}} \right).
\end{align*}
By \cite[(42)]{dkl2}, we have
\begin{equation}\label{req_bound_p}
    \sum_{p \leq x^{1/h}} \frac{1}{p\left( 1- p^{-1/h}+p^{-1} \right)} = \log \log x + B_1 - \log h + \mathcal{L}_h(h+1) - \mathcal{L}_h(2h) 
    + O_h \left( \frac{1}{\log x} \right).
\end{equation}
Thus, using \lmaref{primepowerleqx} with $\alpha = h/(h+1)$, and \lmaref{needhfullomegak} with $y = x^{1/h}$ and $r = h+1$, we obtain
\begin{align*}
\sum_{\substack{n \leq x \\ n \in \mathcal{N}_h}} \omega_h(n) & =  \gamma_{0,h} x^{1/h} \log \log x  + \Bigg( B_1 - \log h - \mathcal{L}_h(2h) \Bigg) \gamma_{0,h} x^{1/h} + O_h \left( \frac{x^{1/h}}{\log x} \right).
\end{align*}
Now, let's consider the case $k > h$. Rewriting \eqref{reghfull1} for $k > h$ and using \lmaref{needhfullomegak}, we obtain
\begin{align*}
    \sum_{\substack{n \leq x \\ n \in \mathcal{N}_h}} \omega_k(n) 
    & = \gamma_{0,h} x^{1/h} \left( \mathcal{L}_h(k) - \mathcal{L}_h(k+1) + O_{h,k} \left( \frac{1}{x^{\frac{1}{h} - \frac{1}{k}} (\log x)}\right)\right) \\
    & \hspace{.5cm} + O_h \left( x^{1/(h+1)} \sum_{p \leq x^{1/k}} \frac{1}{p^{k/(h+1)}} \right).
\end{align*}
Note that, for $k = h+1$,
$$\sum_{p \leq x^{1/k}} \frac{1}{p^{k/(h+1)}} = O (\log \log x)$$
and for $k > h+1$,
$$\sum_{p \leq x^{1/k}} \frac{1}{p^{k/(h+1)}} = O_k (1) .$$
Combining the above results, we obtain
$$\sum_{\substack{n \leq x \\ n \in \mathcal{N}_h}} \omega_{h+1}(n) = \left( \mathcal{L}_h(h+1) - \mathcal{L}_h(h+2) \right)  \gamma_{0,h} x^{1/h} + O_{h} \left( x^{1/(h+1)} \log \log x \right),$$
and for $k > h+1$, we obtain
$$\sum_{\substack{n \leq x \\ n \in \mathcal{N}_h}} \omega_k(n) = \left( \mathcal{L}_h(k) - \mathcal{L}_h(k+1) \right)  \gamma_{0,h} x^{1/h} + O_{h,k} \left( x^{1/(h+1)} \right).$$
This completes the proof.
\end{proof}
\subsection{The second moment of \texorpdfstring{$\omega_k(n)$}{} over \texorpdfstring{$h$}{}-full numbers}
\begin{proof}[\textbf{Proof of \thmref{hfullomegak^2}}]
Note that, for $k \geq h$, we have
\begin{equation}\label{hfullpk1}
    \sum_{\substack{n \leq x \\ n \in \mathcal{N}_h}} \omega_k^2(n) = \sum_{\substack{n \leq x \\ n \in \mathcal{N}_h}} \left( \sum_{\substack{p \\ p^k \Vert n}} 1 \right)^2 = \sum_{\substack{n \leq x \\ n \in \mathcal{N}_h}} \omega_k(n) + \sum_{\substack{n \leq x \\ n \in \mathcal{N}_h}} \sum_{\substack{p,q \\ p^k \Vert n, q^k \Vert n,  p \neq q }} 1,
\end{equation}
where $p$ and $q$ above denote primes. The first sum on the right-hand side above can be estimated using \thmref{hfullomegak} and for the second sum, we first rewrite the sum, and use \lmaref{Aqhlemma} with two distinct primes $p$ and $q$ to obtain
\begin{align}\label{hfullpk2}
    & \sum_{\substack{n \leq x \\ n \in \mathcal{N}_h}} \sum_{\substack{p,q \\ p^k \Vert n, q^k \Vert n, p \neq q}} 1 \notag \\
    & = \sum_{\substack{p,q \\ p \neq q, pq \leq x^{1/k}}} \sum_{\substack{n \leq x/(pq)^k \\ n \in \mathcal{N}_h \\ (p,n) = (q,n) = 1}} 1 \notag \\
    & = \gamma_{0,h} x^{1/h} \sum_{\substack{p,q \\ pq \leq x^{1/k}}} \frac{1}{p^{k/h} \left( 1 + \frac{p^{-1}}{1 - p^{-1/h}} \right)} \frac{1}{q^{k/h}  \left( 1 + \frac{q^{-1}}{1 - q^{-1/h}} \right)} \notag \\
    & \hspace{.5cm} - \gamma_{0,h} x^{1/h} \sum_{\substack{p}} \left( \frac{1}{p^{k/h} \left( 1 + \frac{p^{-1}}{1 - p^{-1/h}} \right)} \right)^2 + O_{h,k} \left( \frac{x^{1/2k}}{\log x}\right)
     \notag \\
    & \hspace{.5cm}  + O_h \left( x^\frac{1}{h+1}   \sum_{\substack{p,q \\ p \neq q, pq \leq x^{1/h}}} \frac{1}{p^{k/(h+1)} q^{k/(h+1)}} \right).
\end{align}
For bounding the sum in the error term above, we use \lmaref{primepowerleqx} and \lmaref{sumplogp} for $k=h$, \lmaref{saidak} for $k = h+1$, and $\sum_{p,q} \frac{1}{(pq)^{k/(h+1)}} = O(1)$ for $k > h+1$. Thus, we obtain
\begin{equation}\label{hfullpk3}
    x^\frac{1}{h+1}   \sum_{\substack{p,q \\ p \neq q, pq \leq x^{1/h}}} \frac{1}{p^{k/(h+1)} q^{k/(h+1)}}  \ll_{h,k} \begin{cases}
    \frac{x^{1/h} \log \log x}{\log x} & \text{ if } k =h, \\
    x^{1/(h+1)} (\log \log x)^2 & \text{ if } k = h+1, \text{ and }\\
    x^{1/(h+1)} & \text{ if } k > h+1.
    \end{cases}
\end{equation}
We study the first sum in the main term in \eqref{hfullpk2} by dividing it into cases. We begin with $k=h$ and estimate the sum
$$\sum_{\substack{p,q \\ pq \leq x^{1/h}}} \frac{1 - p^{-1/h}}{p \left( 1 -p^{-1/h} + p^{-1} \right)} \frac{1 - q^{-1/h}}{q \left( 1 - q^{-1/h} + q^{-1} \right)}.$$
Note that
$$\frac{1 - p^{-1/h}}{p \left( 1 -p^{-1/h} + p^{-1} \right)} = \frac{1}{p} - \frac{1}{p^2 \left( 1 -p^{-1/h} + p^{-1} \right)}.$$
Using this, a similar result for a prime $q$ and the symmetry in primes $p$ and $q$, we expand the previous sum as
\begin{align*}
    & \sum_{\substack{p,q \\ pq \leq x^{1/h}}} \frac{1 - p^{-1/h}}{p \left( 1 -p^{-1/h} + p^{-1} \right)} \frac{1 - q^{-1/h}}{q \left( 1 - q^{-1/h} + q^{-1} \right)} \\
    & = \sum_{\substack{p,q \\ pq \leq x^{1/h}}} \frac{1}{pq} - 2 \sum_{\substack{p,q \\ pq \leq x^{1/h}}} \frac{1}{p q^2 \left( 1 -q^{-1/h} + q^{-1} \right)} \\
    & \hspace{.5cm} + \sum_{\substack{p,q \\ pq \leq x^{1/h}}} \frac{1}{p^2 \left( 1 -p^{-1/h} + p^{-1} \right)} \frac{1}{q^2 \left( 1 -q^{-1/h} + q^{-1} \right)}.
\end{align*}
We bound the first sum above using \lmaref{sumplogp}. For the second sum, we have
\begin{align*}
    & \sum_{\substack{p,q \\ pq \leq x^{1/h}}} \frac{1}{p q^2 \left( 1 -q^{-1/h} + q^{-1} \right)} \\
    & = \sum_{\substack{p \\ p \leq x^{1/h}/2}} \frac{1}{p} \sum_{\substack{q \\ q \leq x^{1/h}/p}} \frac{1}{q^2 \left( 1 -q^{-1/h} + q^{-1} \right)} \\
    & = \mathcal{L}_h(2h) \left(  \log \log x + B_1 - \log h  \right) + O_h \left( \frac{1}{\log x} \right).
\end{align*}
Similarly, for the third sum, we obtain
\begin{align*}
    & \sum_{\substack{p,q \\ pq \leq x^{1/h}}} \frac{1}{p^2 \left( 1 -p^{-1/h} + p^{-1} \right)} \frac{1}{q^2 \left( 1 -q^{-1/h} + q^{-1} \right)} 
    = \left( \mathcal{L}_h(2h) \right)^2 + O_{h} \left( \frac{\log \log x}{x^{1/h} \log x} \right).
\end{align*}
Combining the last three results with \lmaref{sumplogp}, we obtain
\begin{align*}
    & \sum_{\substack{p,q \\ pq \leq x^{1/h}}} \frac{1 - p^{-1/h}}{p \left( 1 -p^{-1/h} + p^{-1} \right)} \frac{1 - q^{-1/h}}{q \left( 1 - q^{-1/h} + q^{-1} \right)} \\
    & = (\log \log x - \log h)^2 + 2 B_1 (\log \log x - \log h) + B_1^2 - \zeta(2) \\
    & \hspace{.5cm} - 2 \left( \mathcal{L}_h(2h) \left(  \log \log x + B_1 - \log h  \right) \right)  + \left( \mathcal{L}_h(2h) \right)^2 + O_h\left( \frac{\log \log x}{\log x} \right).
\end{align*}
Combining the above with \eqref{hfullpk1}, \eqref{hfullpk2}, and \eqref{hfullpk3} for $k =h$ and using \thmref{hfullomegak}, we obtain the second moment for $\omega_h(n)$ over $h$-full numbers.

Now, we focus on the case $k \geq h+1$. 
Using the definition of $\mathcal{L}_h(r)$ \eqref{lhr},
we obtain
\begin{align*}
   & \sum_{\substack{p,q \\ pq \leq x^{1/k}}} \frac{1 - p^{-1/h}}{p^{k/h} \left( 1 -p^{-1/h} + p^{-1} \right)} \frac{1 - q^{-1/h}}{q^{k/h} \left( 1 - q^{-1/h} + q^{-1} \right)} \\
   & = \left( \mathcal{L}_h(k) - \mathcal{L}_h(k+1) \right)^2 + O_{h,k} \left( \frac{x^{\frac{1}{k} - \frac{1}{h}} \log \log x}{\log x}\right).
\end{align*}
Combining the above with \eqref{hfullpk1}, \eqref{hfullpk2}, and \eqref{hfullpk3} for $k \geq h+1$ and using \thmref{hfullomegak}, for $k \geq h+1$, we obtain the required second moments for $\omega_k(n)$ for $k \geq h+1$ over $h$-full numbers.
This completes the proof.
\end{proof}
\section{The Erd\H{o}s-Kac theorems}

In this section, we establish the Erd\H{o}s-Kac theorem for $\omega_1(n)$ over $h$-free numbers and for $\omega_h(n)$ over $h$-full numbers. To do this, we employ ideas from \cite[Proof of Theorem 1.3]{el}. We prove the following two theorems:
\begin{proof}[\textbf{Proof of \thmref{erdoskacforomega1}}]
For an arithmetic function $f$ and a natural number $n \geq 3$, let $r_f(n)$ be the ratio
\begin{equation}\label{rfn}
    r_f(n) := \frac{f(n) - \log \log n}{\sqrt{\log \log n}}.
\end{equation}
In this proof, we will be using $f$ to represent $\omega$ and $\omega_1$ when necessary. For $a \in \mathbb{R}$ and a subset $S$ of natural numbers, let $S(x)$ denote the set of elements of $S$ up to $x$, and 
\begin{equation}\label{dfsxa}
    D(f,S,x,a) := \frac{1}{|S(x)|} \left| \{ n \in S(x) \ : \ r_f(n) \leq a \} \right|
\end{equation}
be the density function for sufficiently large $x$. Since $\omega_1(n) \leq \omega(n)$, thus $r_{\omega_1}(n) \leq r_\omega(n)$ for all $n \geq 3$. Therefore using $S = \mathcal{S}_h$,
\begin{equation*}\label{ke1}
    D(\omega,\mathcal{S}_h,x,a) \leq D(\omega_1,\mathcal{S}_h,x,a)
\end{equation*}
for all $x \geq 3$. Thus, by the Erd\H{o}s-Kac theorem for $\omega(n)$ over $h$-free numbers (see \cite[Theorem 1.4]{dkl3}), we have
\begin{equation}\label{ke2}
    \Phi(a) \leq \liminf_{x \rightarrow \infty} D(\omega_1,\mathcal{S}_h,x,a).
\end{equation}
For any $\epsilon > 0$, we define the set
$$A(\mathcal{S}_h,x,\epsilon) := \left\{ n \in \mathcal{S}_h(x) \ : \  \frac{\omega(n) - \omega_1(n)}{\sqrt{\log \log n}} \leq \epsilon \right\}.$$
Let $A^c(\mathcal{S}_h,x,\epsilon)$ denote the complement of $A(\mathcal{S}_h,x,\epsilon)$ inside $\mathcal{S}_h(x)$. Note that
$$r_{\omega_1}(n) = r_\omega(n) + \frac{\omega(n) - \omega_1(n)}{\sqrt{\log \log n}}.$$
Thus, using the definition of $A(\mathcal{S}_h,x,\epsilon)$, we obtain
\begin{align*}
    \{ n \in \mathcal{S}_h(x) \ : \ r_{\omega_1} \leq a \} & =  \left\{ n \in \mathcal{S}_h(x) \ : \ r_{\omega}(n) + \frac{\omega_1(n) - \omega(n)}{\sqrt{\log \log n}} \leq a \right\} \\
    & = \left\{ n \in A(\mathcal{S}_h,x,\epsilon) \ : \ r_{\omega}(n) + \frac{\omega_1(n) - \omega(n)}{\sqrt{\log \log n}} \leq a \right\} \\
    & \hspace{.5cm} \cup \left\{ n \in A^c(\mathcal{S}_h,x,\epsilon)  \ : \ r_{\omega}(n) + \frac{\omega_1(n) - \omega(n)}{\sqrt{\log \log n}} \leq a \right\}  \\
    & \subseteq \{ n \in \mathcal{S}_h(x) \ : \ r_{\omega}(n) \leq a + \epsilon \} \cup A^c(\mathcal{S}_h,x,\epsilon).
\end{align*}
Then, by the definition of $D(f,\mathcal{S}_h,x,a))$, we have
\begin{equation}\label{ke3}
    D(\omega_1,\mathcal{S}_h,x,a) \leq D(\omega,\mathcal{S}_h,x,a+\epsilon) + \frac{|A^c(\mathcal{S}_h,x,\epsilon)|}{|\mathcal{S}_h(x)|}.
\end{equation}
We intend to show that the second summand on the right-hand side above goes to 0 as $x \rightarrow \infty$. By \eqref{hfreeomega} and \thmref{hfreeomega1}, we have
$$\sum_{n \in \mathcal{S}_h(x)} (\omega(n) -\omega_1(n)) \ll \frac{x}{\zeta(h)}.$$
Additionally,
\begin{align*}
    \sum_{n \in \mathcal{S}_h(x)} (\omega(n) -\omega_1(n)) & \geq \sum_{\substack{\frac{x}{\log x} \leq n \leq x \\ n \in A^c(\mathcal{S}_h,x,\epsilon)} } (\omega(n) -\omega_1(n)) \\
    & > \epsilon  \sum_{\substack{\frac{x}{\log x} \leq n \leq x \\ n \in A^c(\mathcal{S}_h,x,\epsilon)} } \sqrt{\log \log n} \\
    & \geq \epsilon \sqrt{\log \log (x/\log x)} \ | \{ n \geq  x/ \log x  \ : \ n \in A^c(\mathcal{S}_h,x,\epsilon) \} |.
\end{align*}
The above two results imply
$$| \{ n \geq  x/ \log x  \ : \ n \in A^c(\mathcal{S}_h,x,\epsilon) \} | \ll \frac{1}{\epsilon \cdot \zeta(h)} \frac{x}{\sqrt{\log \log (x/\log x)}},$$
where the right-hand side is $o(x)$. Since $|\mathcal{S}_h(x/\log x)|$ is also $o(x)$ and $|\mathcal{S}_h(x)| \gg x/\zeta(h)$, we obtain
$$\lim_{x \rightarrow \infty} \frac{|A^c(\mathcal{S}_h,x,\epsilon)|}{|\mathcal{S}_h(x)|} = 0.$$
Finally, taking the limits as $x \rightarrow \infty$ on both sides of \eqref{ke3}, and using the above with the Erd\H{o}s-Kac theorem for $\omega(n)$ over $h$-free numbers, we obtain
$$\limsup_{x \rightarrow \infty} D(\omega_1,\mathcal{S}_h,x,a) \leq \Phi(a + \epsilon).$$
Since, $\epsilon > 0$ is arbitrary, combining the above with \eqref{ke2} yields
$$\lim_{x \rightarrow \infty} D(\omega_1,\mathcal{S}_h,x,a) = \Phi(a).$$
This completes the proof.
\end{proof}



\begin{proof}[\textbf{Proof of \thmref{erdoskacforomegah}}]
Recall the definitions for $r_f(n)$ and $D(f,S,x,a)$ from \eqref{rfn} and \eqref{dfsxa} respectively. In this proof, we will be using $S = \mathcal{N}_h$ and $f$ to represent $\omega$ and $\omega_h$ when necessary.  

Since $\omega_h(n) \leq \omega(n)$, thus $r_{\omega_h}(n) \leq r_\omega(n)$ for all $n \geq 3$. Therefore using $S = \mathcal{N}_h$,
\begin{equation*}\label{ke12}
    D(\omega,\mathcal{N}_h,x,a) \leq D(\omega_h,\mathcal{N}_h,x,a)
\end{equation*}
for all $x \geq 3$. Thus, by the Erd\H{o}s-Kac theorem for $\omega(n)$ over $h$-full numbers (see \cite[Theorem 1.5]{dkl3}, we have
\begin{equation}\label{ke22}
    \Phi(a) \leq \liminf_{x \rightarrow \infty} D(\omega_h,\mathcal{N}_h,x,a).
\end{equation}
For any $\epsilon > 0$, we define the set
$$A_h(\mathcal{N}_h,x,\epsilon) := \left\{ n \in \mathcal{N}_h \ : \ \frac{\omega(n) - \omega_h(n)}{\sqrt{\log \log n}} \leq \epsilon \right\}.$$
Let $A_h^c(\mathcal{N}_h,x,\epsilon)$ denote the complement of $A_h(\mathcal{N}_h,x,\epsilon)$ inside $\mathcal{N}_h(x)$. Note that
$$r_{\omega_h}(n) = r_\omega(n) + \frac{\omega(n) - \omega_h(n)}{\sqrt{\log \log n}}.$$
Thus, we obtain
\begin{align*}
    \{ n \in \mathcal{N}_h(x) \ : \ r_{\omega_h} \leq a \} & =  \left\{ n \in \mathcal{N}_h(x) \ : \ r_{\omega}(n) + \frac{\omega_h(n) - \omega(n)}{\sqrt{\log \log n}} \leq a \right\} \\
    & = \left\{ n \in A_h(\mathcal{N}_h,x,\epsilon) \ : \ r_{\omega}(n) + \frac{\omega_h(n) - \omega(n)}{\sqrt{\log \log n}} \leq a \right\} \\
    & \hspace{.5cm} \cup \left\{ n \in A_h^c(\mathcal{N}_h,x,\epsilon)  \ : \ r_{\omega}(n) + \frac{\omega_h(n) - \omega(n)}{\sqrt{\log \log n}} \leq a \right\}  \\
    & \subseteq \{ n \in \mathcal{N}_h(x) \ : \ r_{\omega}(n) \leq a + \epsilon \} \cup A_h^c(\mathcal{N}_h,x,\epsilon).
\end{align*}
Then, by the definition of $D(f,\mathcal{N}_h,x,a)$, we have
\begin{equation}\label{ke32}
    D(\omega_h,\mathcal{N}_h,x,a) \leq D(\omega,\mathcal{N}_h,x,a+\epsilon) + \frac{|A_h^c(\mathcal{N}_h,x,\epsilon)|}{|\mathcal{N}_h(x)|}.
\end{equation}
We again intend to show that the second summand on the right-hand side above goes to 0 as $x \rightarrow \infty$. By \eqref{hfullomega} and \thmref{hfullomegak}, we have
$$\sum_{n \in \mathcal{N}_h(x)} (\omega(n) -\omega_h(n)) \ll_h x^{1/h}.$$
Additionally,
\begin{align*}
    \sum_{n \in \mathcal{N}_h(x)} (\omega(n) -\omega_h(n)) & \geq \sum_{\substack{\frac{x}{\log x} \leq n \leq x \\ n \in A_h^c(\mathcal{N}_h,x,\epsilon)} } (\omega(n) -\omega_h(n)) \\
    & > \epsilon  \sum_{\substack{\frac{x}{\log x} \leq n \leq x \\ n \in A^c(\mathcal{N}_h,x,\epsilon)} } \sqrt{\log \log n} \\
    & \geq \epsilon \sqrt{\log \log (x/\log x)} \ | \{ n \geq  x/ \log x  \ : \ n \in A_h^c(\mathcal{N}_h,x,\epsilon) \} |.
\end{align*}
The above two results imply
$$| \{ n \geq  x/ \log x  \ : \ n \in A_h^c(\mathcal{N}_h,x,\epsilon) \} | \ll_h \frac{1}{\epsilon} \frac{x^{1/h}}{\sqrt{\log \log (x/\log x)}},$$
where the right-hand side is $o(x^{1/h})$. Since the size of the set $|\mathcal{N}_h(x/\log x)|$ is also $o(x^{1/h})$ and $|\mathcal{N}_h(x)| \gg \gamma_{0,h} x^{1/h}$, we obtain
$$\lim_{x \rightarrow \infty} \frac{|A_h^c(\mathcal{N}_h,x,\epsilon)|}{|\mathcal{N}_h(x)|} = 0.$$
Finally, taking the limits as $x \rightarrow \infty$ on both sides of \eqref{ke32}, and using the above with the Erd\H{o}s-Kac theorem for $\omega(n)$ over $h$-full numbers, we obtain
$$\limsup_{x \rightarrow \infty} D(\omega_h,\mathcal{N}_h,x,a) \leq \Phi(a + \epsilon).$$
Since, $\epsilon > 0$ is arbitrary, combining the above with \eqref{ke22} yields
$$\lim_{x \rightarrow \infty} D(\omega_h,\mathcal{N}_h,x,a) = \Phi(a).$$
This completes the proof.
\end{proof}
\section{No normal orders}
In this section, we prove \thmref{nonormalorderhfree} and \thmref{nonormalorderhfull}. The former establishes that the functions $\omega_k(n)$ with $1 < k < h$ do not have normal order over $h$-free numbers, and the latter proves that $\omega_k(n)$ with $k > h$ do not have normal order over $h$-full numbers.
\begin{proof}[\textbf{Proof of \thmref{nonormalorderhfree}}]
    We first assume that $F(n)$ is not identically 0. Then, there exists $n_0 \in \mathbb{N}$ such that $F(n_0) > 0$ for all $n \geq n_0$. For $x >2$ and $1 < k < h$, let 
    $$S_{0,k}^h(x) := \{ n \in \mathcal{S}_h(x) \ : \ \omega_k(n) = 0 \}.$$
    Note that
    $$S_{0,k}^h(x) \supseteq \mathcal{S}_{k}(x).$$
    Since $|\mathcal{S}_{k}(x)| \gg x/\zeta(k)$, thus $|S_{0,k}^h(x)| \gg x/\zeta(k)$. In particular, the set of $n \in \mathcal{S}_h(x)$ for which $F(n) > 0$ and $\omega_k(n) = 0$ is not $o(x)$. For all such $n$, notice that the inequality
    \begin{equation}\label{nonormaleq}
        |\omega_k(n) - F(n) | > \frac{F(n)}{2}
    \end{equation}
    is satisfied. Thus, we deduce that $\omega_k(n)$ does not have normal order $F(n)$ when $F(n)$ is not identically zero.

    Next, we work with the case when $F(n)$ is identically 0. Let 
    $$S_{1,k}^h(x) := \{ n \in \mathcal{S}_h(x) \ : \ \omega_k(n) = 1 \}.$$
    Using \lmaref{restrict} for the single prime $p = 2$, we deduce
    \begin{align*}
        |S_{1,k}^h(x)| & \geq \sum_{\substack{n \in \mathcal{S}_h(x) \\ p^k \Vert n \text{ for exactly one prime } p \leq x^{1/k} }} 1 \\
        & \geq \sum_{\substack{n \in \mathcal{S}_k(x/2^k) \\ (n,2) = 1}} 1 \\
        & \gg \frac{2^k - 2^{k-1}}{2^k(2^k - 1)} \frac{x}{\zeta(k)}.
    \end{align*}
    Thus, the set of $n \in \mathcal{S}_h(x)$ for which $F(n) > 0$ and $\omega_k(n) = 1$ is not $o(x)$. Also, for all such $n$, inequality \eqref{nonormaleq} is satisfied. Thus, $\omega_k(n)$ does not have normal order $F(n)$ when $F(n)$ is identically zero. This completes the proof.
\end{proof}
%
\begin{proof}[\textbf{Proof of \thmref{nonormalorderhfull}}]
    We first assume that $F(n)$ is not identically 0. Thus, there exists $n_0 \in \mathbb{N}$ such that $F(n_0) > 0$ for all $n \geq n_0$. For $x >2$ and $k > h$, let 
    $$N_{0,k}^h(x) := \{ n \in \mathcal{N}_h(x) \ : \ \omega_k(n) = 0 \}.$$
    Note that
    $$N_{0,k}^h(x) \supseteq (\mathcal{N}_h \cap \mathcal{S}_k) (x).$$
    Moreover, using \lmaref{hfullkfree} for the prime $p =2$, we obtain
    \begin{align*}
        |(\mathcal{N}_h \cap \mathcal{S}_k) (x)| & \geq \sum_{\substack{n \leq x \\ n \in \mathcal{N}_h \cap \mathcal{S}_k, (n , 2) = 1}} 1 \\
        & \gg \frac{1 - 2^{-1/h}}{1 - 2^{-1/h} + 2^{-1} - 2^{-k/h}} \eta_{h,k} x^{1/h}.
    \end{align*}
    Thus, the set of $n \in \mathcal{N}_h(x)$ for which $F(n) > 0$ and $\omega_k(n) = 0$ is not $o(x^{1/h})$. Also, for all such $n$, inequality \eqref{nonormaleq} is satisfied. Thus, $\omega_k(n)$ does not have normal order $F(n)$ when $F(n)$ is not identically 0.

    Next, we work with the case when $F(n)$ is identically 0. Let 
    $$N_{1,k}^h(x) := \{ n \in \mathcal{N}_h(x) \ : \ \omega_k(n) = 1 \}.$$
    Using \lmaref{Aqhlemma} for the single prime $p = 2$, we deduce
    \begin{align*}
        |N_{1,k}^h(x)| & \geq \sum_{\substack{n \in \mathcal{N}_h(x) \\ p^k \Vert n \text{ for exactly one prime } p \leq x^{1/k} }} 1 \\
        & \geq \sum_{\substack{n \leq x/2^k \\ n \in \mathcal{N}_h \cap \mathcal{S}_k, (n,2) = 1}} 1 \\
        &  \gg \frac{1 - 2^{-1/h}}{2^{k/h}(1 - 2^{-1/h} + 2^{-1} - 2^{-k/h})} \eta_{h,k} x^{1/h},
    \end{align*}
    where $\eta_{h,k}$ is given by \eqref{etahk}. Therefore, the set of $n \in \mathcal{N}_h(x)$ for which $F(n) > 0$ and $\omega_k(n) = 1$ is not $o(x^{1/h})$. Also, for all such $n$, inequality \eqref{nonormaleq} is satisfied. Thus, $\omega_k(n)$ does not have normal order $F(n)$ when $F(n)$ is identically zero. This completes the proof.
\end{proof}

In this work, we establish that $\omega_1(n)$ has normal order $\log \log n$ and also satisfies the Erd\H{o}s-Kac theorem over $h$-free. Similarly, $\omega_h$ has normal order $\log \log n$ and also satisfies the Erd\H{o}s-Kac theorem over $h$-full numbers. We also proved that $\omega_k(n)$ with $1 < k < h$ do not have normal order over $h$-free numbers and $\omega_k(n)$ with $k > h$ do not have normal order over $h$-full numbers. These results can be generalized to a general number field. The authors have been working on this and will report their findings in a future article. Note that the function field analog of this research has been studied by G\'omez and Lal\'in \cite{lg}.

\section{Acknowledgement}

The authors would like to thank Matilde Lal\'in for the helpful discussions.

\bibliographystyle{plain} 

\end{document}